\documentclass[12pt,english]{amsart}
\usepackage[T1]{fontenc}
\usepackage[utf8]{luainputenc}
\usepackage{xcolor}
\usepackage{pdfcolmk}
\usepackage{amsthm}
\usepackage{amsmath}
\usepackage{stmaryrd}
\usepackage{stackrel}
\usepackage{setspace}
\PassOptionsToPackage{normalem}{ulem}
\usepackage{ulem}
\usepackage{tikz}
\makeatletter

\providecolor{lyxadded}{rgb}{0,0,1}
\providecolor{lyxdeleted}{rgb}{1,0,0}

\numberwithin{figure}{section}
\numberwithin{equation}{section}
\numberwithin{table}{section}

  \input amssymb.sty
\usepackage{etoolbox}
\patchcmd{\thebibliography}{\section*}{\section}{}{}

\usepackage{algpseudocode} \usepackage{algorithm}
\usepackage{algpseudocode,algorithm}
\usepackage{lipsum}
\usepackage{caption}
\usepackage{graphics}

\usepackage{amsmath}
\usepackage{amssymb}
\usepackage[all]{xy}

\usepackage{epsfig}\usepackage{youngtab}

\newcommand{\ef}{\end{equation}}
\chardef\bslash=`\\ 

\newdir{:=}{{}}
\newcommand*\colvec[3][]{
    \begin{pmatrix}\ifx\relax#1\relax\else#1\\\fi#2\\#3\end{pmatrix}
}

\newtheorem*{thm*}{Theorem}

\newtheorem{lem}{Lemma}[section]
\newtheorem*{lem*}{Lemma}

\newtheorem*{corl*}{Corollary}
\newtheorem{prop}{Proposition}[section]

\newtheorem{prop*}{Proposition}
\newtheorem{Goal*}{Goal}

\theoremstyle{definition}
\newtheorem{defn}{Definition}[section]
\newtheorem{examp}{Example}
\newtheorem*{examp*}{Example}
\newtheorem*{remark*}{Remark}
\newtheorem*{CC*}{Crossover Conjecture}
\newtheorem*{Note*}{Note}
\newtheorem*{defn*}{Definition}

 \theoremstyle{remark}
\newtheorem{remark}{Remark}[section]

 \renewcommand{\sectionmark}[1]{}

\newcommand{\la}{\langle}
\newcommand{\ra}{\rangle}

\newcommand{\thrh}{\frac{3}{2}}

\newcommand{\sevh}{\frac{7}{2}}
\newcommand{\eleh}{\frac{11}{2}}

\newcommand{\defect}{\operatorname{def}}

\newcommand{\diag}{\operatorname{diag}}

\newcommand{\lm}{\lambda}
\newcommand{\Lm}{\Lambda}

 \newcommand{\supp}{\operatorname{supp}}

\newcommand{\hub}{\operatorname{hub}}
\newcommand{\cont}{\operatorname{cont}}

\renewcommand{\a}{\alpha}

\makeatother

\usepackage{babel}

\begin{document}
\title{Symmetry for the spin highest weight module   $V(\Lambda_n)$}

\author{Ola Amara-Omari}
\address{ Bar-Ilan University, Ramat-Gan, Israel}
\email{Omariao@biu.ac.il}

\subjclass[2020] {17B10, 17B65, 05E10, 20C08}
\maketitle

 \begin{abstract} 
 For an affine Lie algebra $\mathfrak g$ of type $A^{(2)}_{2n}$ we get a special set of partitions for the highest weight modules $V(\Lambda)$, where $\Lambda=\Lambda_n$, and we also get a some results for the canonical basis elements. We demonstrate a symmetry of weights with fixed $n $-content around the weights gotten from the empty partition by adding multiples of the null root. We also find the highest power of the quantum parameter $q$ in an important set of canonical basis elements.
 
\end{abstract}

\maketitle
\section{INTRODUCTION}
The projective representations of the symmetric groups $S_n$ are representations of a central extension $\tilde{S_n}$ of the symmetric group by a cyclic group of order two. When the generator $z\in Z(\Tilde{S_n})$ satisfies $\phi (z)=1$, the representation is called ordinary and for $\phi (z)=-1$ we get a spin representation. The $p$-strict partitions labeling spin representations represent the irreducible modules over a field of characteristic $p$ [MY1,MY2]. Leclerc and Thibon [LT] conjectured that the spin representation for  affine Lie algebra corresponds to the simple modules of highest weight representation $V(\Lambda)$ for the case $\Lambda=\Lambda_0$, and it was proven in [BK]. In our last paper [AS1] we conjecture an algorithm to construct spin multipartitions for general highest weight modules  $V(\Lambda)$ in $A^{(2)}_{2n}$ , where $h=2n+1$.

In this paper we construct partitions for the case $\Lambda=\Lambda_n$, and get  partitions of special form. Also we get some result for the canonical basis elements and we define a slice in the crystal corresponding to this case. For a Kac-Moody algebra $\mathfrak g$ of classical affine type one can construct a $q$-deformed Fock space of level 1, and in [AS1] it was shown that the Fock space is a module for the quantum group $U_q(\mathfrak g)$ in the case where  $\mathfrak g$ is of type $A^{(2)}_{2n}$. Leclerc and Thibon [LT] studied this Fock space for $\Lambda=\Lambda_0$ and get a connection with spin representations of $\mathfrak S_m$ in characteristic $h$, where $h$ is prime, such that the action of the standard generators of $U_q(\mathfrak g)$ corresponds to Morris branching rules describing induction and restriction of  spin representation between $\mathfrak S_m$ and $\mathfrak S_{m+1}$. We study the case  $\Lambda=\Lambda_n$, giving information on the structure of the canonical basis elements in important cases. 
   
\section{DEFINITIONS AND NOTATION}

Let $\mathfrak g$ be the affine Lie algebra $A^{(2)}_{2n}$ as in [\cite{Ka}, Ch. 4] and let $C=[a_{ij}]$ be the Cartan matrix. For $n=1$, the Cartan matrix is
\[
C=
\begin{bmatrix}
2&-4\\
-1&2
\end{bmatrix}.
\]
For the general case $n\geq 2$, we have
\[C=
\begin{bmatrix}
2&-2&0&\dots&0&0\\
-1&2&-1&0&\dots&0\\
0&-1&2&-1&\dots&0\\
\dots&\dots&\dots&\dots&\dots&\dots\\
\dots&\dots&\dots&\dots&\dots&\dots\\
0&\dots&0&-1&2&-2\\
0&0&\dots&0&-1&2
\end{bmatrix}.
\]

 Let $Q$ be the $\mathbb Z$-lattice generated by the simple roots
$\a_0,\dots,\a_{n}$. Kac \cite{Ka} uses the notation $\a_i^\vee$ for the coroots, but we will denote the corresponding coroots by $h_0, h_1, \dots h_\ell$. 
 The pairing between a root and a coroot is given by the appropriate entry in the Cartan matrix. 
\[
\langle h_i, \a_j \rangle =a_{ij}, i,j=0,1,\dots,n.
\]

The Cartan matrix is not symmetric, but it is symmetrizable, the symmetrizing matrix being the diagonal matrix $D=\diag(\frac{1}{2}, 1,\dots, 1, 2)$. Thus in the symmetric product, $(\a_0\mid \a_0)=1$,
	$(\a_i \mid \a_i)=2$ for $i=1,2,\dots,n-1$ and $(\a_n \mid \a_n)=4$.
Other values of this symmetric product of interest to us will be $(\a_0\mid\a_1)=-1$, $(\a_{n-1}\mid\a_n)=-2$. Also $(\Lambda_i \mid \a_j)=\delta_{ij}$, for all
$i,j=0,1,\dots,\ell$.
In this paper, we will be working with a fixed non-zero
 dominant integral weight, that is to say, a weight $\Lambda$ such that $\langle h_i, \Lambda \rangle $ is a nonnegative integer for every $i$.  

The Cartan subalgebra of the Lie algebra $\mathfrak g$ is generated by the 
$h_0, h_1, \dots, h_\ell$ and an element $d$ called the \textit{scaling element},
which satisfies 
\[
\langle d, \alpha_i \rangle =0, 1 \leq i \leq \ell, \langle d,\alpha_0 \rangle =1.
\]

The center of  $\mathfrak g$ is one-dimensional, spanned by 
\[
c = h_0+2\sum_{i=1}^n h_i,
\] 
which is called the \textit{canonical central element}. The \textit{level} $r$ of a weight $\lambda$ is $\langle \lambda, c \rangle$. 

Let  $Q_+$ be the subset of $Q$ in which all coefficients are nonnegative.
The \textit{null root} $\delta$ is $2\a_0+2\a_1\dots+2\a_{n-1}+\a_{n}$.
This formula is chosen for the null root because the vector $(2,2,2,\dots,2,1)$, as a column vector, produces $0$ when multiplied by the Cartan matrix on the left. We have  $(\a_i\mid \delta)=0$. 

 We define the fundamental weights $\Lambda_j, 0 \leq j \leq \ell$ together with the null root  to be weights dual to the coroots and the scaling element, so chosen that $ \langle h_i, \delta \rangle = 0$ for all $i$, $\langle d, \delta \rangle=1$, $ \langle d, \Lambda_j \rangle = 0$ for all $j$, and
$ \langle h_i, \Lambda_j \rangle = \delta_{ij}$, where $\delta_{ij}$ is the Kronecker delta, non-zero only when $i=j$ and then equal to $1$.

The $\mathbb Z$-lattice $P$ of weights of the affine Lie algebra spans a real vector space that has two different bases.  One is given by t
he fundamental weights together with the null root, $\Lambda_0,\dots, \Lambda_{\ell}, \delta$, 
and one is given by $\Lambda_0, \a_0,\dots,\a_{\ell}$.  We will usually use the first basis for our weights, where Kac \cite{Ka} prefers the second.
 In addition, $(\Lambda_0\,\mid \Lambda_0 )=(\delta \mid \delta) =0$ and $(\Lambda \mid \delta )=r$, the level defined in the previous paragraph.

 Let $\Lambda$ be a fixed non-zero dominant integral weight that is a sum of fundamental weights with nonnegative integer coefficients. Let $V(\Lambda)$ be a highest weight module with  
 highest weight $\Lambda$, and let $P(\Lambda)$ be the set of weights of $V(\Lambda)$. The Dynkin diagram for affine type $A^{(2)}_{2n}$ is a graph with vertices $I=\{0,1,\dots,n\}$ and connecting adjacent verticies, double arrow at each end pointing toward zero. In affine type $A^{(2)}_{2n}$, if $\Lambda=a_0\Lambda_0+a_1\Lambda_1+\dots+ a_n\Lambda_n$ for nonnegative integers $a_i$, the level is given by the formula $r=a_0+2a_1+ \dots +2a_n$ . We have $r>0$ because $\Lambda$ is non-zero. 
 \begin{defn}
 Let $t=a_0+a_1+\dots+a_n$. A $multicharge$ is a t-tuple $s$ of integers. In our work it is always $a_0$ copies of the zero, $a_1$ copies of 1, up to $a_n$ copies of $n$, and we write $s=(k_1,k_2,\dots,k_t)$. 
 \end{defn}

  As in [\cite{Kl},\S3.3], we define the defect of a weight $\lambda=\Lambda -\alpha$ by 

\[
\defect(\lambda)=\frac{1}{2}((\Lambda \mid \Lambda)-  (\lambda \mid \lambda))=(\Lambda \mid \alpha)-\frac{1}{2}(\alpha \mid \alpha).
\]
\noindent
Since we are in a highest weight module, we always have $(\Lambda \mid \Lambda) \geq (\lambda \mid \lambda)$ [\cite{Ka}, Prop. 11.4(a)].  The definition of the defect is general for all affine types, and our affine Lie algebra is the only one for which the defect may be a half-integer. There is an alternative definition of the defect which multiplies it by $2$ to get an integer, but we prefer to use the general definition. The weights of defect $0$ are those lying in the Weyl group orbit of $\Lambda$.
Every weight $\lambda \in P(\Lambda)$ has the form $\Lambda-\alpha$, for $\alpha \in Q_+$. We will need
\begin{align*}
\defect(\lambda-j \delta)=&(\Lambda \mid \alpha+j\delta)-\frac{1}{2}(\alpha +j\delta \mid \alpha +j\delta)\\
=&\defect(\lambda)+jr.
\end{align*}

\begin{defn}\label{deg} Let $\lambda = \Lambda - \alpha$ with $\alpha \in Q_+$. If $\alpha=\sum_{i=0}^\ell c_i \alpha_i$, where all $c_i$ are nonnegative, then the vector 
\[
\cont(\lambda)=(c_0.\dots,c_\ell)
\]
is called the \textit{content} of $\lambda$. The \textit{degree} $\deg(\lambda)$ of the weight $\lambda$ is the integer $n=\sum_{i=0}^\ell c_i$.
\end{defn}

Define
\[
\max (\Lambda)=\{\lambda \in P(\Lambda) \mid \lambda + \delta \not\in P(\Lambda)\},
\] 
and by [\cite{Ka}, \S 12.6.1], every element of 
$P(\Lambda)$ is of the form $\{y-j\delta \mid y \in \max(\Lambda), j \in \mathbb Z_{ \geq 0}\}$.

\begin{defn}\label{M} Let $W$ denote the Weyl group
	\[
	W=T \rtimes  \mathring{W}
	\] 
	expressed as a semidirect product of 
	a normal abelian subgroup $T$ by the finite Weyl group $\mathring{W}$ given by crossing out the first row and column of the Cartan matrix.  The elements of $T$ are transformations of the form 
	
	\[
	t_\alpha(\zeta)=\zeta+r\alpha -((\zeta|\alpha)+\frac{1}{2}(\alpha|\alpha)r)\delta,
	\]
	\noindent where for affine type A, the weights $\a$ are taken from the $\mathbb Z$-lattice $M$ generated by $\a_1, \dots ,\a_{\ell}$, omitting $\a_0$ [\cite{Ka}, \S 
	6.5.2] but in the case $A^{(2)}_{2n}$, from a lattice generated by the long roots with half integer coefficients. The role of $M$ is explained in detail in \cite{FSv2}, the version of \cite{BFS} which treats the spin case in greatest detail. In particular,  Prop. 2.11 of \cite{FSv2} implies that any two positive hubs in $P(\Lambda)$ differ by an element of $Q$.
\end{defn}

\begin{defn}\label{hub} For any weight $\lambda$ in the set of weights  $P(\Lambda)$ for a dominant integral weight $\Lambda$, we let $\hub(\lambda)=[\theta_0,\dots, \theta_{\ell}]$ be the \textit{hub} of $\lambda$, where 
	\[
	\theta_i=\la h_i, \lambda \ra.
	\]
	\noindent Let $U$ be the subspace of the real weight space generated by the fundamental weights. The hub is the projection (with respect to the decomposition $U \oplus \mathbb{R}\delta$) of the weight of $\lambda$ onto $U$. We write the hub with square brackets to distinguish the hub from the content. The original definition given by Fayers in \cite{Fa} was the negative of this one.
\end{defn} 
\section {partitions with $\Lambda=\Lambda_n$}

A conjectured algorithm for general restricted spin multipartitions is given in [AS1]. Constructing the highest weight module for spin multipartitions involves showing that each $V(\Lambda_i)$ is a module for the quantum enveloping algebra and then taking a tensor product. The case   $\Lambda=\Lambda_n$ had many special properties.

\begin{defn}\label{3.1}
	The set $P(\Lambda)$ can be taken as the set of vertices of a graph $\widehat P(\Lambda)$ which we will call the \textit{block-reduced 
		crystal}. Two weights $\mu,\nu \in P(\Lambda)$ will be connected by an edge of residue $i$ for $i=0,1,\dots, n$   if $\mu -\nu = \pm \alpha_i$ for a simple root $\alpha_i$. A maximal set of vertices connected by edges of residue $i$ will be called an \textit{$i$-string}
	 and all $i$-strings are finite because $V(\Lambda)$ is integrable.
\end{defn}
The block-reduced crystal was used in  [\cite{AS}, Def. 2.6] or [\cite{BFS},\S 2]. 
	\begin{examp}
		As an example, consider the case of $h=3$ and $\Lambda=\Lambda_1$.  The reduced crystal $\widehat P(\Lambda)$ is given in Figure 1 to degree 16.  The vertices are labeled by hubs, with the defect as an exponent.  The action of $f_1$ is diagonal to the right, and of $f_0$ is diagonal to the left. The fundamental weight $\Lambda_1$ is positive integral with hub $[0,1]$, so the level is
		$r=(\Lambda \mid c) = 2$. For any hub $[\theta_0,\theta_1]$ we must have $r=\theta_0+2\theta_1$.
		
		\begin{tikzpicture}
			
			\draw (0,0) node[anchor=south]{$[0,1]^0$} -- (.5, -.5) node[anchor=west]{$[4,-1]^0$}; 
			\draw  (.5,-.5)   --  (-1.5, -2.5)  node[anchor=east]{$[-4,3]^0$};
			\draw (0,-1) node[anchor=east]{$[2,0]^{\thrh}$}; 
			\draw (-.5,-1.5) node[anchor=east]{$[0,1]^2$} -- (0,-2) node[anchor=west]{$[4,-1]^2$};;
			\draw (-1,-2) node[anchor=east]{$[-2,2]^\thrh$} -- (0,-3) node[anchor=west]{$[6,-2]^\thrh$} ;
			\draw (-1.5,-2.5) -- (0, -4) node[anchor=west]{$[8,-3]^0$};
			\draw (0,-2) -- (-2,-4)	node[anchor=east]{$[-4,3]^2$};	
			\draw (0,-3) --(-3,-6) node[anchor=east]{$[-6,4]^\thrh$};
			\draw (-1.5,-3.5) node[anchor=east]{$[-2,2]^\sevh$} -- (-.5,-4.5) node[anchor=west]{$[6,-2]^\sevh$};
			\draw (0,-4) --(-4,-8) node[anchor=east]{$[-8,5]^0$};
			\draw (-2,-4) -- (-.5,-5.5) node[anchor=west]{$[8,-3]^2$};
			\draw (-.5,-5.5) -- (-3,-8);
			\draw (-2,-5) node[anchor=east]{$[-2,2]^\eleh$} -- (-1,-6) node[anchor=west]{$[6,-2]^\eleh$};
			\draw (-2.5,-5.5) node[anchor=east]{$[-4,3]^4$} -- (-1,-7) node[anchor=west]{$[8,-3]^4$};
			\draw (-1,-7) -- (-2,-8);
			\draw (-3,-6) --(-1,-8) node[anchor=west]{$[10,-4]^\thrh$};
			\draw (-3,-7) node[anchor=east]{$[-4,3]^6$} -- (-2,-8);
			\draw (-3.5,-7.5) node[anchor=east]{$[-6,4]^\sevh$} -- (-3,-8);
		\end{tikzpicture}
		
		Figure 1. $\Lambda=\Lambda_1 $, $h=3$. Vertices are marked by hub and defect.
		
\end{examp}

 We now make use of the hub and its components $\theta_i$ from Def \ref{hub}. Let us first recall the residue function we are using, following \cite{LT}. We have an odd integer $h=1+2n$. 

\begin{defn}
    For any integer $m$ we define $\widehat{m}$ to be min$ \{ m-1 \mod h,-m \mod h\}$ . 
\end{defn}

Setting aside for the moment residue $0$, we try a similar procedure taking the spin residue of $k_\ell+s-r+1$. We have to add $1$ because $\hat{1}=0$ and  $\hat{2}=1$. We need 
the spin residue of the node $(1,1,\ell)$ to be $k_\ell$
and the spin residue of $(1,2,\ell)$ to be $k_\ell+1$.  

\begin{examp}
    Let $\Lambda=\Lambda_3,h=7$, we could get a Young diagram with residues

    \young(321001232,2321,12,01,00)
\end{examp}

\begin{defn}
	A partition  is $h$-\textit{restricted} if it does not have 
	$h$ identical columns and is $h$-strict in each partition has no identical rows unless the length of the row is divisible by $h$.
\end{defn}

\begin{defn}
	Let $\mathfrak{g}= A^{(2)}_{2n}$ and let $\Lm=(a_0\Lm_0+\dots+a_n\Lm_n)$ be a positive integral weight for $\mathfrak{g}$. A \textit{spin multipartition} for $\Lm$ is a set of $a_0+a_1+\dots+a_n$ partitions, of which
	\begin{itemize}
		\item  The bottom $a_0$ are $h$-restricted $h$-strict partitions each of which has $0$ in its upper left-hand corner when we use the English convention for Young diagrams of a partition. These will be called the  $0$-\textit{corner partitions}.
		\item For each $i>0$ in ascending order, $a_i$ partitions which are $h$-restricted but not need to be $h$-strict.  The residue of a node with row $s$ and column $t$ will be $\widehat{(i+t-s+1)}$, and the partition will be called an $i$-\textit{corner partition}. If $0<i<j\leq n$, the $i$-corner partitions will lie below the $j$-corner partitions. Given our convention for the multicharge $s=(k_1,\dots,k_t)$, the residue of the node $(t,s,\ell)$ is $\widehat{(k_{\ell}+t-s+1)}$.
	\end{itemize}
\end{defn}

\begin{defn} 
	Given a  $j$-corner partition $\lambda$, for a residue $j>0$, and an arbitrary residue $i$,  an $i$-node adjacent to the end of a row of $\lm$ is an \textit{addable $i$-node} if adding it produces an well-defined partition, and there are also double addable $0$-nodes. The complete set of addable $i$-nodes is the maximal set of $i$-nodes such that adding them produces an well-defined partition. Similarly,  a node on the end of a row of a partition is \textit{$i$-removable} if  removing the node still gives an well-defined partition. The complete set of removable $i$-nodes is the set of all $i$-nodes in $\lambda$ that can be removed and will leave an well-defined partitions.
\end{defn}
\textbf{Conjectured algorithm for contructing $h$-regular $h$-restricted spin multipartitions.} We designate addable and removable nodes  by $+$ and $-$ as Kleshchev does \cite{Kl}  and write them from left to right as we go up in the multipartition, to get the \textit{$i$-signature}. In $i$-corner partitions for $i \neq 0$, it may happen that a removable $0$-node lies directly above an addable $0$-node.  Such a pair of nodes will be designated in the list by $-+$. 
	We eliminate any cases of $+-$ from the list. 
When we have made all the eliminations, we have a string of $-$, and finally a string of $+$.  This list is called the \textit{reduced} $i$-signature. Any or all of the strings could be empty, but if there is a $+$, then the leftmost is the \textit{i-cogood} node which will be 
added to produce the result of operating by $f_i$ in the Kashiwara crystal.

The set of all the partitions obtained by acting on the empty partitions by various $ f_i$, will be
called the $h$-restricted partition. It remains to prove that the $h$-restricted partitions are in one-to-one correspondence with
basis elements of a highest weight module $V(\Lambda)$.

The following Lemma is responsible for most of the special properties of $\Lambda=\Lambda_n$.

\begin{lem}
Let $\Lambda=\Lambda_n, h=2n+1,$ and let $\lambda \in P(\Lambda)$ be a partition, then the residue of the node  (i,j) in  $\lambda$ is equal the residue of the node  (j,i) in  $\lambda$.
\end{lem}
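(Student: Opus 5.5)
The plan is to compute the residue of a node directly from the definition of an $i$-corner partition and reduce the claim to a congruence property of the map $m\mapsto\widehat m$. For $\Lambda=\Lambda_n$ the multicharge is $s=(n)$, so $\lambda$ is a single $n$-corner partition. By the definition of spin multipartition, the node in row $s$ and column $t$ has residue $\widehat{(n+t-s+1)}$. Writing $d=t-s$, the node $(i,j)$ has residue $\widehat{(n+1+d)}$ and the node $(j,i)$ has residue $\widehat{(n+1-d)}$. So the statement reduces to the identity
\[
\widehat{(n+1+d)}=\widehat{(n+1-d)}\qquad\text{for all } d\in\mathbb Z,
\]
with $h=2n+1$. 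The identity is used only as a statement about residue labels, so it does not depend on the shape of $\lambda$.

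The key step is a symmetry of the residue function. From the definition $\widehat m=\min\{(m-1)\bmod h,\,(-m)\bmod h\}$, we have $\widehat{(1-m)}=\min\{(-m)\bmod h,\,(m-1)\bmod h\}=\widehat m$. This is just the swap of the two entries in the min, so $m\mapsto\widehat m$ is invariant under the reflection $m\mapsto 1-m$. The map is also $h$-periodic, so it is invariant under $m\mapsto 1-m+kh$ for every integer $k$.

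Apply this with $m=n+1+d$:
\[
1-m+h=1-(n+1+d)+(2n+1)=n+1-d.
\]
Hence $\widehat{(n+1+d)}=\widehat{(n+1-d)}$, and the residues of $(i,j)$ and $(j,i)$ agree. Conceptually, $n+1$ is the midpoint $\tfrac{1+h}{2}$ of the reflection center: $m$ and $1-m$ are symmetric about $\tfrac12$, which is congruent to $\tfrac{h+1}{2}=n+1$ modulo $h$ after the shift by $h$. This is exactly why $\Lambda_n$ is special.

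The main obstacle is bookkeeping rather than mathematics. The paper's indexing convention (row $s$, column $t$, with the "$+1$" shift) must be matched correctly, which I will check against the example $\Lambda_3$, $h=7$. There the $(1,1)$ residue is $\widehat{4}=\min\{3,3\}=3$, and the $(1,2)$ and $(2,1)$ residues are $\widehat{5}=2$ and $\widehat{3}=2$, in agreement with the displayed diagram. Another point to address is that $\lambda$ need not itself be a symmetric shape; the claim concerns residues assigned to positions, so whenever both $(i,j)$ and $(j,i)$ lie in $\lambda$ (or are considered as potential nodes), the computation above applies verbatim.
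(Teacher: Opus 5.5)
Your proof is correct and follows essentially the same route as the paper. The paper substitutes $n=\frac{h-1}{2}$ and adds $h$ to write the residue of $(i,j)$ as $\min\{(n+(i-j)) \bmod h,\,(n+(j-i)) \bmod h\}$, which is visibly symmetric in $i$ and $j$; your reflection identity $\widehat{(1-m)}=\widehat{m}$ combined with $h$-periodicity is a cleaner packaging of that same swap of the two arguments of the minimum.
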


\begin{proof} The multicharge in this case is $s=(n)$, so there is a single partition and $k_1=n$.

According to the algorithm we construct in [AS1], the residue of the node $(i,j)$ in $\lambda$ is equal to  $(\widehat{k_\ell+j-i+1})$, and in our case is equal to $\widehat{(n+j-i+1)}$ , where the hat is over the entire sum. We now recall that this residue equals $\equiv min \{-(n+(j-i)+1) \mod h ,(n+(j-i)) \mod h \}$. Substitute $n=\frac{h-1}{2}$ in the first argument and add $h$ we get $ min \{n+(i-j) \mod h ,n+(j-i) \mod h \}$. Now for the residue of the node $(j,i)$ in $\lambda$ is equal to  $(\widehat{k_\ell+i-j+1})$=  $\widehat{(n+j-i+1)}$ that equal to $ min \{-(n+(i-j)+1) \mod h ,(n+(i-j)) \mod h \}$, Substitute $n=\frac{h-1}{2}$ in the first argument and add $h$ to it we get $ min \{\frac{h-1}{2}+(i-j) \mod h ,n+(j-i) \mod h \}$ $\equiv min \{n+(i-j) \mod h ,n+(j-i) \mod h \}$. so we get the same arguments in the two nodes. 
\end{proof}

As we explained in the introduction, the spin multipartitions require a more complicated algorithm than the type A $e$-restricted multipartitions because we are taking them from a tensor product in which the factors are mutually non-isomorphic.  We could have found ourselves in a situation where each residue requires a different criterion.  Fortunately, this does not appear to be the case.


\begin{lem}
Let $\Lambda=\Lambda_n, h=2n+1,$ and let $\lambda$ be a partition such that $ \defect(\lambda)=0$, then $\lambda^t=\lambda$.
\end{lem}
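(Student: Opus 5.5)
Defect zero means $\lambda$ lies in the Weyl group orbit $W\Lambda$. So I plan to argue by induction along a reduced path from the empty partition to $\lambda$, staying inside the orbit and tracking the partitions produced by the conjectured algorithm.

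\textbf{Setup.} Write $\lambda=w\Lambda$ with $w=s_{i_m}\cdots s_{i_1}$ reduced. Then each intermediate weight $\lambda^{(k)}=s_{i_k}\cdots s_{i_1}\Lambda$ has defect $0$. Each step is obtained from the previous one by applying $f_{i_k}^{N_k}$, where $N_k=\langle h_{i_k},\lambda^{(k-1)}\rangle\ge 0$, i.e.\ by moving to the far end of the $i_k$-string. For a defect-zero weight the weight space is one-dimensional, so there is exactly one partition of that weight in the set produced by the algorithm. In the crystal this partition is obtained by adding \emph{all} addable $i_k$-nodes to the previous partition, and removing none. The reason is that at an extremal weight the reduced $i$-signature consists only of $+$'s, with no uncancelled $-$.

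\textbf{Induction.} I prove that every $\lambda^{(k)}$ is a symmetric partition, starting from the empty partition, which is trivially symmetric. Suppose $\mu=\lambda^{(k-1)}$ satisfies $\mu^t=\mu$. By the previous Lemma, the residue of a node $(a,b)$ equals the residue of $(b,a)$. Moreover, $(a,b)$ is addable to $\mu$ if and only if $(b,a)$ is addable to $\mu^t=\mu$. Hence the set of addable $i$-nodes of $\mu$ is stable under transposition. Adding the complete set of addable $i_k$-nodes therefore gives a partition $\nu$ with $\nu^t=\nu$. Since $\nu$ is the unique partition of weight $\lambda^{(k)}$, the induction step is complete.

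\textbf{Main obstacle.} The delicate point is the claim that, at an extremal weight, the crystal step adds exactly the complete set of addable $i$-nodes. This needs two facts.
\begin{itemize}
\item The number of addable minus removable $i$-nodes of $\mu$ equals $\langle h_i,\mu\rangle$. Double addable $0$-nodes must be counted with multiplicity two, matching the factor coming from the Cartan matrix entry $a_{0n}$-type asymmetry.
\item $\mu$ has no removable $i$-nodes when $\langle h_i,\mu\rangle\ge0$ and $\mu$ has defect $0$.
\end{itemize}
I would get the second fact from the string structure. If $\mu+\alpha_i\in P(\Lambda)$, then $s_i$ applied to that weight would give a weight with $\defect$ computed via $(\Lambda|\alpha)-\tfrac12(\alpha|\alpha)$, which contradicts the fact that extremal weights are endpoints of their $i$-strings. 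Consequently $\mu$ has no removable $i$-node, so the signature is all $+$, and the $N_k$ applications of $f_{i_k}$ add every addable $i_k$-node.

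For the residue-$0$ subtlety, I will check separately that a $-+$ pair never occurs on the diagonal. Such a pair would be transposition-symmetric in any case, so symmetry is preserved either way.
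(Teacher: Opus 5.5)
Your proposal is correct and is essentially the paper's proof: induct along a reduced expression $s_{i_m}\cdots s_{i_1}$ for the defect-zero weight, observe that each step $f_{i_k}^{(N_k)}$ adds the complete set of addable $i_k$-nodes, and use Lemma~3.1 together with $\mu^t=\mu$ to see that this set is stable under transposition. You are more careful than the paper, which never rules out removable $i_k$-nodes, but one link is left implicit and should be stated: removing a removable $i$-node from $\mu$ would give a partition of weight $\mu+\alpha_i$, which has negative defect, and this is impossible once one knows (a standard Fock-space fact, not proved in the paper) that every partition in the Fock space has its weight in $P(\Lambda)$.
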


\begin{proof}
Every partition $\lambda \in P(\Lambda)$ such that $ \defect(\lambda)=0$ we can get by $s_{i_m}\dots s_{i_2}s_{i_1}(\emptyset)$. By induction on the degree  $m$, for $m=1 , s_n(\emptyset)=\young(n)$. \\\ For $m=2 , s_{n-1}s_n(\emptyset)=\young({n}{i},{i}), i=n-1$ .\\  Assume that after $m$ steps we have the partition $\mu$ such that $\defect(\mu)=0$, $\mu^t=\mu$ and $\hub (\mu)=[\theta_0,\dots,\theta_n]$, and let us prove for $m+1$. Now we operate by $s_j$, and we get $\defect(f_j^{(\theta_j)}(\mu))=0$, now let $f_j^{(\theta_j)}(\mu)=\lambda$, and $\lambda_k $ be the length of the row $k$ in the partition $\lambda$. Assume that $(k,s)$ is a $j$-addable node in  $\lambda$, since that $\lambda_{k+1} \leq \lambda_k \leq \lambda_{k-1} $ and $\lambda^t=\lambda$ we get $\lambda^t_{k+1} \leq \lambda^t_k \leq \lambda^t_{k-1} $, the node $(s,k)$ is also an $j$-addable node on $\lambda$. According to Lemma 3.1, $(s,k)$ is a j-node, since the $j$-nodes are symmetric. 
\end{proof}

	\section{Slices in $\Lm=\Lm_n$}

		Let $I_0$ be a set of residues, $I_0\subset I$. We give a realization of the graph $\widehat P(\Lambda)$ by embedding it in $P_\mathbb{R}$.
		
		\begin{defn}\label{face} 
		 	An $I_0$-\textit{slice} is the intersection of $\widehat P(\Lambda)$ with a linear subspace $V$  on which, for all $j \in I-I_0$, there is a nonnegative integer $d_j$ such that   $v_j=d_j$ for all vectors $v \in V$. A $I_0$-\textit{face} is a slice containing the highest weight $\Lambda$, so that all the fixed integers $d_j$ are $0$.
		\end{defn}	
	
	\begin{remark} Any $I_0$-face is contained in $\max(\Lambda)$ and thus by the results of \cite{BFS} the weight is entirely determined by the hub. 
		
		If $I_0=I-\{n\}$, then a $I_0$-slice consists of all vertices with a fixed $n$-component $c_n$ of the content. In \cite{AS}, this was called a floor.
	\end{remark}

          \begin{figure}[h] 
				\begin{tikzpicture}
					\draw (0,0) node[anchor=south]{$[0,0,2,-1]^0$} -- (1, -1) node[anchor=west]{$[0,1,0,0]^1$};
					\draw (1,-1) -- (2,-2) node[anchor=west]{$[0,2,-2,1]^0$} ;
                    \draw (1,-1)-- (1,-2) node[anchor=east]{$[2,-1,1,0]^1$} ;
                    \draw (2,-2) -- (2,-3) node[anchor=west]{$[2,0,-1,1]^1$} ;
                    \draw (1,-2) -- (2,-3);
                   \draw (2,-3) -- (2,-4) node[anchor=west]{$[4,-2,0,1]^0$} ;
                   \draw (1,-2)-- (0,-3) node[anchor=east]{$[0,0,1,0]^{\frac{3}{2}}$} ;
                  \draw (2,-3) -- (1,-4) node[anchor=east]{$[0,1,-1,1]^{\frac{3}{2}}$} ;
                  \draw (0,-3) -- (1,-4) ;
                   \draw  (0,-3) -- (-1,-4) node[anchor=east]{$[-2,1,1,0]^1$} ;
                   \draw (1,-4) -- (0,-5) node[anchor=north]{$[-2,2,-1,1]^1$} ;
            \draw (-1,-4) -- (0,-5) ;
            \draw (0,-5) -- (0,-6) ;
            \draw (1,-4) -- (1,-5) ; 
            \draw (2,-4) -- (1,-5) node[anchor=west]{$[2,-1,0,1]^{\frac{3}{2}}$} ;
            \draw (1,-5) -- (0,-6);
            \draw (-1,-4) -- (-1,-5) node[anchor=east] {$[0,-1,2,0]^1$} ;
            \draw (-1,-5) -- (0,-6) node[anchor=west]{$[0,0,0,1]^2$} ;
            \draw (0,-6) -- (1,-7) node[anchor=west]{$[0,1,-2,2]^1$} ;
           \draw (0,-6) -- (0,-7) node[anchor=north]{$[2,-2,1,1]^1$} ;
              \draw (0,-6) -- (-1,-7)  node[anchor=east]{$[-2,1,0,1]^{\frac{3}{2}}$}  ;
                \draw (1,-7) -- (1,-8) node[anchor=west]{$[2,-1,-1,2]^1$} ;
              \draw (0,-7) -- (1,-8) ;
               \draw (0,-7) -- (-1,-8) node[anchor=west]{$[0,-1,1,1]^2$};
                \draw (-1,-7) -- (-1,-8) ;
                \draw (1,-8)-- (0,-9) node[anchor=west]{$[0,0,-1,2]^2$};
                \draw (-1,-8) -- (0,-9);
                 \draw (-1,-7) -- (-2,-8) node[anchor=east]{$[-4,2,0,1]^0$};
                 \draw (-1,-8)--(-2,-9);
                 \draw (-2,-8)  -- (-2,-9) node[anchor=east]{$[-2,0,1,1]^1$};
                 \draw (-2,-9)--(-1,-10) node[anchor=west]{$[-2,1,-1,2]^1$};
                 \draw (0,-9)-- (-1,-10);
                 \draw (-2,-9)--(-2,-10) node[anchor=east]{$[0,-2,2,1]^0$};
                 \draw (-1,-10)--(-1,-11) ;
                  \draw (-2,-10) -- (-1,-11) node[anchor=east]{$[0,-1,0,2]^1$} ;
                  \draw (-1,-11)--(0,-12) node[anchor=west]{$[0,0,-2,3]^0$}  ;
				\end{tikzpicture}
			   \centering
			   \caption{ $\Lambda=\Lambda_3$, $p=7$, $I_0=\{0,1,2\}$}
			\end{figure}
            
		A $[0,1,\dots,n-1]$-slice is isomorphic to a module for the finite Lie algebra of type $B_{n}$. First we want to prove a lemma that appears in \cite{H} as an exercise.

        \begin{lem}
        There is a unique element $w_0 \in W$, that is  called the longest element in $P(\Lambda)$. Its length is exactly the number of positive roots in $\Delta$,  and it sends all the positive roots to negative roots, and this element maps the fundamental Weyl chamber to its opposite, $w_0(\phi^+)= \phi^-=-\phi^+$.
         \end{lem}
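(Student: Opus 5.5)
This statement is the standard exercise from \cite{H} about the \emph{finite} Weyl group. The affine Weyl group $W$ has no longest element, and $P(\Lambda)$ is not a group, so I read it as a statement about the finite Weyl group $\mathring{W}$ of type $B_n$, with root system $\Delta$ and positive roots $\phi^+$. This is the setting relevant to the $[0,1,\dots,n-1]$-slice. The proof uses only two standard facts about $\mathring{W}$ acting on $\Delta$:
\begin{itemize}
\item[(a)] the length $\ell(w)$ equals $n(w)=|\{\alpha\in\phi^+ : w(\alpha)\in\phi^-\}|$;
\item[(b)] $\mathring{W}$ acts simply transitively on the set of Weyl chambers, equivalently on the set of bases (positive systems) of $\Delta$.
\end{itemize}
Both are proved from the fact that a simple reflection $s_i$ permutes $\phi^+\setminus\{\alpha_i\}$ and sends $\alpha_i$ to $-\alpha_i$. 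From this one gets $n(ws_i)=n(w)\pm1$, and the deletion/exchange condition then yields (a) and the simple transitivity in (b).

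\textbf{Existence.} The set $-\phi^+=\phi^-$ is again a positive system of $\Delta$; its base is $-\Pi$, and its chamber is $-C$, where $C$ is the fundamental chamber. By transitivity in (b) there is some $w_0\in\mathring{W}$ with $w_0(\phi^+)=\phi^-$, equivalently $w_0(C)=-C$. By (a), $\ell(w_0)=n(w_0)=|\phi^+|$, which is the number of positive roots.

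\textbf{Uniqueness and maximality.} For any $w$ we have $n(w)\le|\phi^+|$, so by (a) $\ell(w)\le|\phi^+|$. Suppose $\ell(w)=|\phi^+|$. Then $w$ sends every positive root to a negative root, so $w(\phi^+)=\phi^-=w_0(\phi^+)$. Hence $w_0^{-1}w$ stabilizes $\phi^+$, and therefore stabilizes $C$. Simple transitivity (the stabilizer of a chamber is trivial) gives $w=w_0$. So $w_0$ is the unique element of maximal length, and it is also the unique element sending $\phi^+$ to $\phi^-$.

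As a byproduct, $w_0^2$ fixes $\phi^+$, so $w_0^2=1$ and $w_0$ is an involution. For type $B_n$ one has in fact $w_0=-1$.

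\textbf{Main obstacle.} The real content lies in establishing (a) and the simple transitivity in (b). If they are not simply cited from \cite{H}, I would prove (a) by induction on length using the exchange condition. I would then derive simple transitivity as follows: if $w(C)=C$ then $n(w)=0$, so $\ell(w)=0$ by (a), and hence $w=1$.
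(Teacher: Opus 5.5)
Your proof is correct, and it does something the paper does not: the paper gives no argument for this lemma. It offers only a ``sketch'' that cites Humphreys' Exercises 10.9, 13.5 and 21.6, and then describes what the longest element $\sigma$ is used for. That use is an involution on the finite-type module exchanging the highest weight $\Lambda$ (empty multipartition, lowest degree) with $\rho=\sigma\Lambda$ (highest degree), together with the duality $V(\lambda)^*\cong V(-\sigma\lambda)$ and the induced map on hubs. You instead give the standard solution of Exercise 10.9 itself:
\begin{itemize}
\item existence, from transitivity on bases applied to $-\Pi$;
\item $\ell(w_0)=|\phi^+|$, from $\ell=n$;
\item uniqueness of the element of maximal length, by reducing $w_0^{-1}w$ to a chamber stabilizer.
\end{itemize}
Your route therefore yields an actual proof of the stated facts from two standard Weyl-group properties, while the paper's sketch only records the consequences it needs later. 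The duality of Exercise 21.6, which the paper's later symmetry argument attributes to this lemma, is not covered by your proof. It is not part of the lemma as stated either.

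Two small remarks. First, reading $W$ as a finite Weyl group is the right repair, since the affine $W$ has no longest element and $P(\Lambda)$ is not a group. However, for the $\{0,\dots,n-1\}$-slice the relevant group is the parabolic subgroup $\langle s_0,\dots,s_{n-1}\rangle$, not $\mathring W=\langle s_1,\dots,s_n\rangle$ as the paper defines it. Both are finite reflection groups of type $B_n/C_n$ acting on reduced root systems, so your argument applies verbatim. Second, your byproduct $w_0=-1$ is exactly why the paper's hub involution negates all but the last hub coordinate and adjusts only that one to preserve the level. The Dynkin diagram automorphism induced by $-w_0$, which the sketch alludes to, is trivial here.
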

\noindent
Sketch of proof:
		 The finite Lie algebra module has an involution in its hub coordinates which takes the highest weight element $\Lambda$  to its image $\rho$  under the longest element of the finite Weyl group, which  we will denote by $\sigma$. In terms of degrees in the crystal, $\Lambda$ has the lowest degree and corresponds to the empty multipartition, while $\rho$ has the highest degree, which is to say that its multipartition has the greatest number of nodes. The involution of the irreducible modules of the finite Lie groups is discussed in 
	 [\cite{H},\S 10, \S 21].
		 
		Specifically, there is a duality between the highest weight module $V(\lambda)$ for the highest weight $\lambda$ and $V(-\sigma \lambda)$, where $\sigma$ is the longest element of the finite Weyl group. The procedure uses the automorphism of the Dynkin diagram and sends a weight with hub $[b_1,b_2,\dots,\frac{r}{2}-\sum_{i=1}^{n-1}b_i]$ to a weight with hub  $[-b_1,-b_2,\dots,\frac{r}{2}+\sum_{i=1}^{n-1}b_i]$. The actual results we need are given as exercises, 10.9,13.5 and 21.6.  

\begin{prop}
Let $\Lambda=\Lambda_n,h=2n+1$, a slice for an interval $\{0,\dots.n-1\}$ has an involution $\tau$ which send the weight $\lambda$ with hub $[b_1,b_2,\dots,b_n]$ to  a weight $\lambda'$ with hub $[-b_1,-b_2,\dots,r-b_n]$ and satifies $\tau(f_i(\lambda))=e_i(\lambda')$, and the involution preserve the defect of the weights, $defect (\lambda)=defect (\lambda')$. 
\end{prop}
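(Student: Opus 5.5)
The plan is to realize $\tau$ as the longest element $\sigma$ of the finite Weyl group $\mathring W$ of the subsystem $\{0,1,\dots,n-1\}$, which is of type $B_n$ (equivalently $C_n$). A slice with fixed $n$-content $c_n$ is the weight set of a finite-dimensional module for the Levi subalgebra generated by $e_i,f_i,h_i$, $i<n$. This module is a direct sum of irreducibles $V(\mu)$, possibly twisted by the fixed part of the content. By the lemma on the longest element, $\sigma$ sends every positive root of this subsystem to a negative root. Since $-1$ lies in the Weyl group of type $B_n$, we have $\sigma=-\mathrm{id}$ on the span of $\alpha_0,\dots,\alpha_{n-1}$. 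So $\sigma(\alpha_i)=-\alpha_i$ for each $i<n$, and no diagram automorphism is needed.

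\textbf{Step 1: the hub formula.} I will compute $\sigma$ on hubs. For $i<n$, $\langle h_i,\sigma\lambda\rangle=\langle\sigma h_i,\lambda\rangle=-\langle h_i,\lambda\rangle$, because $\sigma$ acts as $-1$ on the finite coroots. This gives the first $n$ coordinates $-b_1,\dots,-b_{n-1}$. The remaining coordinate is then forced by the level. The hub components satisfy a fixed linear relation with the level $r$ (coefficients from $c=h_0+2\sum h_i$). Keeping $r$ fixed while negating the other coordinates should yield the stated last entry $r-b_n$, possibly with a normalization of how the paper indexes the hub. I will check this bookkeeping against Figure 2. For example, $[0,0,2,-1]$ should pair with the bottom vertex $[0,0,-2,3]$, and this matches $r=2$, $-1\mapsto 3=r-b_n$ up to the paper's convention.

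\textbf{Step 2: crystal compatibility.} Because $\sigma$ lies in the Weyl group of the Levi factor, it maps weights of each Levi-irreducible constituent to weights of the same constituent, and it preserves multiplicities. This also holds in $\widehat P(\Lambda)$, since the slice is exactly the weight set of a Levi module. From $\sigma(\lambda-\alpha_i)=\sigma\lambda+\alpha_i$ we get $\tau(f_i\lambda)=e_i\tau(\lambda)$ at the level of block-reduced edges. Applying $\sigma$ twice gives the identity, so $\tau$ is an involution.

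\textbf{Step 3: the defect.} Since $\sigma\in W$ preserves $(\cdot\mid\cdot)$ and fixes $\Lambda$ up to the orbit, we have
\[
(\sigma\lambda\mid\sigma\lambda)=(\lambda\mid\lambda),
\]
and hence $\defect(\sigma\lambda)=\defect(\lambda)$ by the defining formula $\defect(\lambda)=\frac12((\Lambda\mid\Lambda)-(\lambda\mid\lambda))$.

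\textbf{Main obstacle.} The hardest point is showing that $\tau$ genuinely stabilizes the slice, i.e. that the $n$-component of the content is preserved. $\sigma$ changes the weight by a combination of $\alpha_0,\dots,\alpha_{n-1}$ only, so $c_n$ is unchanged. The difficulty is that the $\delta$-coordinate is then determined, and one must check that $\sigma\lambda$ is again in $P(\Lambda)$ rather than merely in its $W$-orbit. This holds because $P(\Lambda)$ is $W$-stable (Kac, Prop. 11.2). I will also verify that the paper's "hub $[b_1,\dots,b_n]$" convention matches the computation above. If there is a mismatch, I will recompute $\tau$ directly using the $\mathring W$-symmetry and the relation $r=\theta_0+2\sum\theta_i$.
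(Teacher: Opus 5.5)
Your argument is correct, but it takes a different route from the paper's proof.

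The paper defines $\tau$ directly by the hub formula and checks everything in coordinates. It verifies $\tau\circ\tau=\mathrm{id}$ on hubs. It then argues, by induction along paths in the slice, that the hub change produced by $f_i$ (each entry $\theta_j$ shifts by $-a_{ji}$) is carried by $\tau$ to the hub change produced by $e_i$. The longest element appears only in the motivating sketch before the proposition, via the duality $V(\lambda)\leftrightarrow V(-\sigma\lambda)$ and a diagram automorphism.

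You instead identify $\tau$ with the longest element $\sigma$ of the $B_n$ Weyl group generated by $s_0,\dots,s_{n-1}$. Everything then follows from $\sigma=-1$ on $\mathrm{span}(\alpha_0,\dots,\alpha_{n-1})$, which also explains why no diagram automorphism is needed.

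What each route buys:
\begin{itemize}
\item The paper's coordinate check is elementary and uses only the Cartan matrix.
\item However, that check alone never shows that the hub $[-b_0,\dots,-b_{n-1},r-b_n]$ is realized by a weight of $V(\Lambda)$ in the same slice. It also does not address the defect claim at all.
\item Your approach gets all of this at once:
\begin{itemize}
\item $W$-invariance of $P(\Lambda)$, together with the fact that $\sigma$ changes weights only by combinations of $\alpha_0,\dots,\alpha_{n-1}$, gives stability of the slice.
\item $W$-invariance of $(\cdot\mid\cdot)$ gives $\defect(\sigma\lambda)=\defect(\lambda)$.
\item $\sigma^2=1$ gives the involution.
\item $\sigma(\alpha_i)=-\alpha_i$ gives $\tau(\lambda-\alpha_i)=\tau(\lambda)+\alpha_i$ on edges of $\widehat P(\Lambda)$.
\end{itemize}
\end{itemize}

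You can drop the hedging in Step 1. From $\theta_0+2\sum_{i=1}^{n}\theta_i=r$ and $\theta_i'=-\theta_i$ for $i<n$, you get exactly $\theta_n'=r-\theta_n$; equivalently, $\sigma$ fixes the central element $c$. So there is no normalization issue, and here $r=2$.
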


       \begin{proof}
           We will check that $\tau$ is an involution on the $n$ components of the hub.
           $\tau(\tau([b_1,b_2,\dots,b_n]))=\tau[-b_1,-b_2,\dots,r-b_n]=[b_1,b_2,\dots,b_n]$.
           Now by induction, let the proposition  hold on the  length $m$ of a path in $P(\Lambda)$ where the partition is $\lambda$, and $f_i(\lambda)=\mu$, the dual operation is acting on $\tau(\lambda)$ by $e_i$. Its enough to prove that $e_i(\tau(\lambda))=\tau(f_i(\lambda))$. In the $i$-component of $f_i(\lambda)$ we get $b_i-2$, and for $i \in \{1,\dots, n-1\}$ the $(i-1)$-component and the $(i+1)$-component rise by 1, so $\tau[b_0,\dots,b_{i-1}+1,b_i-2,b_{i+1}+1,\dots,b_n ]=[-b_0,\dots,-b_{i-1}-1,-b_i+2,-b_{i+1}-1,\dots,r-b_n ]$, in the other hand $e_i(\tau(\lambda))=[-b_0,\dots,-b_{i-1}-1,-b_i+2,-b_{i+1}-1,\dots,b_n ]$. Now when 
           $i=0$ we get $f_i(\tau(\lambda))=\tau(e_i(\lambda))=[-b_0+2,-b_1-2\dots,-b_{i-1},-b_i,-b_{i+1},\dots,r-d_n ]$
       \end{proof}

	\begin{prop}
 The  slice on $\Lambda=\Lambda_n,h=2n+1, c_n=c$ is symmetric around the hub $[0,\dots,0,1]^{2c}$.    
\end{prop}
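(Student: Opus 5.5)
The plan is to realize the claimed symmetry as the action of the longest element of a finite parabolic subgroup of $W$. The center of the symmetry is $\Lambda-c\delta$. Since $\Lambda=\Lambda_n$ has hub $[0,\dots,0,1]$ and level $r=2$, and $\delta$ has $\alpha_n$-coefficient $1$, the weight $\Lambda-c\delta$ has $c_n=c$. It has the same hub $[0,\dots,0,1]$, and by the formula $\defect(\lambda-j\delta)=\defect(\lambda)+jr$ its defect is $2c$. Every weight of the slice with $c_n=c$ can therefore be written as $\Lambda-c\delta-\beta$ with $\beta$ in the span of $\alpha_0,\dots,\alpha_{n-1}$; the coefficients of $\beta$ may have either sign, but they are integers. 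The statement to prove is then: $\Lambda-c\delta-\beta\mapsto\Lambda-c\delta+\beta$ is a bijection of the slice onto itself which preserves defect and carries $i$-edges to $i$-edges.

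Let $W_0\subset W$ be the subgroup generated by $s_0,\dots,s_{n-1}$. Deleting the last row and column of the Cartan matrix gives a finite-type matrix (type $C_n$, or $B_n$ depending on convention), so $W_0$ is finite. By the lemma on the longest element quoted from \cite{H}, it has a longest element $w_0$ sending the positive roots of this subsystem to negative ones. For types $B_n/C_n$ the longest element acts as $-1$ on the span of $\alpha_0,\dots,\alpha_{n-1}$. Each $s_i$ with $i<n$ fixes $\Lambda_n$, since $\langle h_i,\Lambda_n\rangle=0$, and fixes $\delta$, since $\langle h_i,\delta\rangle=0$. Hence
\[
w_0(\Lambda-c\delta-\beta)=\Lambda-c\delta+\beta .
\]
This formula has the following consequences.
\begin{itemize}
\item \emph{Membership.} $P(\Lambda)$ is $W$-stable, so $w_0$ maps the slice into $P(\Lambda)$. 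The $\alpha_n$-coefficient is unchanged, so the image lies in the same slice. Since $w_0^2=1$, the map is a bijection.
\item \emph{Defect.} $(\cdot\mid\cdot)$ is $W$-invariant, so the defect is preserved. As a check, $\defect(\Lambda-c\delta\mp\beta)=2c\pm(\Lambda_n\mid\beta)-\tfrac12(\beta\mid\beta)=2c-\tfrac12(\beta\mid\beta)$, because $(\Lambda_n\mid\alpha_i)=0$ for $i<n$.
\item \emph{Edges.} Since $w_0(\alpha_i)=-\alpha_i$ for $i<n$, two weights differing by $\alpha_i$ are sent to weights differing by $-\alpha_i$. Thus each $i$-string is mapped to an $i$-string with its direction reversed. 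This is exactly the property $\tau(f_i\lambda)=e_i(\tau\lambda)$ of the preceding Proposition.
\end{itemize}

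I would also check that $w_0$ agrees with the involution $\tau$ of the previous Proposition on hubs. The hub depends linearly on $\beta$: the first $n$ coordinates change sign, and the last coordinate $1+x$ is sent to $1-x=r-(1+x)$. This matches $[b_1,\dots,b_n]\mapsto[-b_1,\dots,r-b_n]$, and its unique fixed hub is $[0,\dots,0,1]$. So the symmetry is "around" the hub $[0,\dots,0,1]^{2c}$, and the result can be phrased equally well via $\tau$.

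The main obstacle is justifying that the longest element of $W_0$ acts as $-1$. This requires identifying the subdiagram on $\{0,\dots,n-1\}$ correctly: the double bond between $0$ and $1$ makes it $B_n/C_n$ rather than type $A$, and for type $A$ this would fail. I would first verify it directly from the root system, where the $B_n/C_n$ root system contains $\pm$ an orthonormal basis, so $-1\in W_0$. Failing that, I would cite \cite{H}, Exercise 10.9 / \S 13 for $w_0=-1$ in types $B,C$.
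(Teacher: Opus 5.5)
Your proof is correct, and it takes a more direct route than the paper's.

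\emph{The paper's route.} It decomposes the slice $c_n=c$ into highest weight modules for the type $B_n$ subalgebra generated by $e_i,f_i$, $i<n$. The generators are the Kashiwara basis elements that enter the slice only through $f_n$ from the slice $c_n=c-1$. It applies the longest-element duality $V(\lambda)\leftrightarrow V(-\sigma\lambda)$ to each component separately. It then needs a level computation at the two ends ($\theta_0+2\theta_1+\dots+2\theta_{n-1}=2-2\theta_n$, giving $\theta_n^*=2-\theta_n$) to show that all components share the center $[0,\dots,0,1]$. That center sits at defect $2c$ because $\Lambda_n-c\delta$ lies in the slice.

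\emph{Your route.} You let a single element $w_0$ of the parabolic subgroup $\langle s_0,\dots,s_{n-1}\rangle\subset W$ act on all of $P(\Lambda)$. This subgroup fixes $\Lambda_n$ and $\delta$, and $w_0=-1$ on $\bigoplus_{i<n}\mathbb{R}\alpha_i$. Hence the common center is automatically the fixed point $\Lambda-c\delta$, and the hub map is forced to be $[\theta_0,\dots,\theta_{n-1},\theta_n]\mapsto[-\theta_0,\dots,-\theta_{n-1},2-\theta_n]$. Stability of the slice, preservation of defect, and reversal of $i$-strings all follow from $W$-stability of $P(\Lambda)$ and $W$-invariance of $(\cdot\mid\cdot)$. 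You never need the decomposition into components, nor the separate argument that the components have a common center.

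\emph{What each buys.} Both proofs rest on the same fact, that the longest element of type $B_n$ acts as $-1$. Your version is more economical and more rigorous, and it gives equality of weight multiplicities at symmetric points for free. The paper's componentwise version is set up to give finer, crystal-level information, namely the slice as a union of $B_n$-crystals. That is the setting of its Proposition on $\tau$ with $\tau(f_i\lambda)=e_i(\tau\lambda)$.

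One small remark: for $n=1$ the relevant subdiagram is of type $A_1$, where $-1\in W_0$ as well. So your caveat that the argument would fail in type $A$ does not affect that case.
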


\begin{proof}
The $V(\Lambda_n)$ module has a Kashiwara basis, and each element $\lambda$ in the Kashiwara basis has a weight $wt(\lambda)$, with a content $\Bar{c}=(c_0,\dots,c_n)$, such that $\lambda=\Lambda_n-\sum_{i=1}^{n}c_i\alpha_i\in P(\Lambda_n)$. The module $V(\Lambda_n)$ is generated from $\phi$ by the $f_i^{(k)},i=0,\dots,n,k>0$. If we restrict to the action of $f_i^{(k)},i=0,\dots,n-1$, then $V(\Lambda_n)$ is a direct sum of slices with content $c_n=c$. In a slice $c_n=c$, every   Kashiwara basis element $x$ which comes from another  Kashiwara basis element $y$ by $f_i,i=0,\dots,n-1$ will be in the highest weight module generated by $y$, only a Kashiwara basis element $x$ which comes only by $f_n$ from the slice $c_n=c-1$ does not lie in some larger highest weight module within the slice. We claim that these Kashiwara basis element 
generate the slice. 

   Every slice is generated by the  highest weight modules of a basis.  Since each highest weight module is symmetric around the same point, then every element has a symmetric element at the other end of the highest weight module that it generates.
 The level in the case $\lambda=\Lambda_n$ is equal to 2. Let $[\theta_0,\theta_1,\dots, \theta_{n-1},\theta_n]$ be the   hub of the weight of the lowest degree in a slice, where  $\theta_0+2\theta_1+\dots+ 2\theta_{n-1}=2-2\theta_n$. In the high degree end of the slice we have the hub $[-\theta_0,-\theta_1,\dots, -\theta_{n-1},\theta^*_n]$, where $-\theta_0-2\theta_1-\dots-2\theta_{n-1}=2-2\theta^*_n$. Adding these equation give $0=4-2\theta_n-2\theta^*_n$, so $\theta^*_n=2-\theta_n$. According to Lemma 4.1 $V(\Lambda)$ is isomorphic to $V(-\sigma\Lambda)$, so they are symmetric around the average of the previous weights $[0,\dots,0,1]$. 
For $c=1$, we have a single point $[0,\dots,0,1]^2$.
  In our crystal we should get the hub $[0,\dots,0,1]^{2c}$ because $\lambda=\Lambda_n+c\delta \in P(\Lambda)$. The length of $i$-strings for each $i$ around the  hub $[0,\dots,0,1]^{2c}$  in the face is even because of having 0 in each component of the hub, in addition to that  $\tau [0,\dots,0,1]=[0,\dots,0,1]$ and it is the only fixed point.

\end{proof}

	\section{THE CANONICAL BASIS FOR $\Lm=\Lm_n$}

Let $\mathfrak{g}=A^{(2)}_{2n}$ be the affine Lie algebra.
 For an affine Lie algebra given by a symmetrizable Cartan matrix $C=[a_{ij}]$, the quantum enveloping algebra $\mathcal U=  U_q(\mathfrak{g})$ is generated by $e_j$, $f_j$, $K_j$, $K_j^{-1}$, $D$, subject to relations depending on the Cartan matrix [\cite{KMOY},3].

For each residue $i$, we have a power of $q$ depending on the length of the corresponding root in the symmetric power. For our case this gives
\[
q_i=\begin{cases}
	q, &i=0\\q^2, &1\leq i < n \\q^4,&i=n\\
\end{cases}
\]
We define the quantum factorials by 
\[
[k]_i=\frac{q_i^k-q_i^{-k}}{q_i-q_i^{-1}}, [k]_i!=[k]_i\cdot\cdot\cdot[2]_i[1]_i,
\]
and the divided powers, we define by 
 
\[
e_i^{(k)}=\frac{e_i^k}{[k]_i!},f_i^{(k)}=\frac{f_i^k}{[k]_i!}
\]

\subsection{Fock space}
The $q$-Fock space $\mathfrak{F}$ of level 2 is a vector space over $\mathbb{Q}(q)$ with a basis labelled by partitions where we denote by $\lvert \mu \rangle$ the basis element labelled by $\mu$, and the empty multipartition is represented by $\lvert ~\rangle$. This vector space is a module for $\mathcal U$ with combinatorial rules for the divided powers $e_i^{(k)} $ and $f_i^{(k)}$ as follow.

			\begin{itemize}
				\item  For an $i$-addable node $\mathfrak n$, let us n define  $N(\mathfrak n,i)=\#\{$   addable  $i$-nodes below ~$\mathfrak n\}-\# \{$ removable $i$-nodes below $\mathfrak n \}$ and set
				\[
				f_i(\lvert\lambda\rangle)=\sum_{\mathfrak n}q^{N(\mathfrak n,i)}\lvert\lambda^{\mathfrak n}\rangle.
				\]
				
				\item  For an $i$-removable node $\mathfrak m$, let us 
				define $M(\mathfrak m,i)=\#\{$ removable $i$-nodes above~$\mathfrak m\}-  \# \{$  addable  $i$-nodes above $\mathfrak m\}$.
				\[
				e_i(\lvert\mu\rangle)=\sum_{\mathfrak m}q^{M(\mathfrak m,i)}\lvert\mu_{\mathfrak m}\rangle.
				\]
				\item Letting $N(i)=\#\{$   addable  $i$-nodes $\}-\# \{$ removable $i$-nodes $\}$, we let $K_i=q^{h_i}$, act on an element $\lvert\mu\rangle$  of the natural basis by multiplication by $q^{N(i)}$, giving $q^{N(i)}\lvert \mu\rangle$.
				\item Letting $N_0$ be the number of $0$-nodes in $\mu$, $q^d\lvert \mu \rangle=q^{N_0}\lvert \mu \rangle.$
			\end{itemize}

           The submodule of $\mathfrak {F}$ generated by the empty partition $\phi$, under the action of $f_i^{(k)}$ is isomorphic to the irreducible highest weight $\mathcal U$-module $V(\Lambda_n)$, and  contains a canonical basis.
More details can be found in [AS1] and [LT].

			\subsection{Canonical basis elements}
		
		Canonical basis elements lie in Fock spaces, which are modules over the quantum enveloping algebras. For a type A, a good reference for this material which does not require using infinite wedge products is the work of Lascoux, Leclerc and Thibon, [\S 4,\cite{ LLT}]. Working in level $1$, their Fock spaces have a natural basis given by partitions.  This was extended by Ariki and Mathas to higher levels, using multipartitons as labels for the natural basis of the Fock space\cite{AM}.  Our work is combinatorial and does not go into the algebra involved, so our treatment here is quite abbreviated.

		There is an involution of the quantum enveloping algebra called the bar-involution which fixes $e_i$, $f_i$, sends $K_i$ to $K_i^{-1}$, $q^{d}$ to $q^{-d}$, and interchanges $q$ and $q^{-1}$.

        For each h-regular partition $\mu$, there is an unique vector $G(\mu)\in \mathcal F$ that is bar-invariant under the bar involution [and is congruent to the natural basis element of $\lvert \mu \rangle$ modulo $q$]. If when we write $G(\mu)=\sum_{\lambda }d_{\lambda\mu}\lambda$, the coefficient $d_{\mu\mu}$ equals 1 while all the other coefficient $d_{\lambda\mu}$ are polynomials divisible by $q$, the vector $G(\mu)$ is called the  canonical basis vector corresponding to $\mu$ and the set $\{G(\mu)|\mu\in P_h\}$ is the canonical basis. An algorithm for constructing the $G(\mu)$ recursively in the case of partitions of type A was given originally by  Lascoux, Leclerc, and Thibon in \cite{LLT}. The LLT algorithm was extended to higher levels by Jacon \cite{J} using a twisted product of level $2$ Fock spaces, and by Yvonne \cite{Y} to the entire Fock space.  A faster algorithm for an untwisted product of level $2$ Fock spaces is given by Fayers in \cite{F} and implemented by Scrimshaw as the function Fock\_space() in SageMath \cite{SM}.
		The algorithm is recursive in the degree of the multipartition.

\begin{prop}
   For $\Lambda=\Lambda_n, h=2n+1$, if an element $\lvert \lambda \rangle$ appears in a canonical basis element $G(\mu)$, then $\lvert \lambda^t \rangle$ also appears in $G(\mu)$.
\end{prop}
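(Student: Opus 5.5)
We argue by induction on the degree of $\mu$, following the recursive LLT-type construction of $G(\mu)$. The idea is to compare two vectors: $G(\mu)$ itself, and its image under the linear map $T\colon \mathfrak F\to\mathfrak F$ defined on the natural basis by $T\lvert\lambda\rangle=\lvert\lambda^t\rangle$. We will show that $T$ essentially commutes with the action of $\mathcal U$ and with the bar involution. Then $T(G(\mu))$ is again bar-invariant and again satisfies the defining congruence, so it is forced to coincide with $G(\mu)$ or with $G(\mu^t)$. Either way, the support of $G(\mu)$ is closed under transposition.

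The key input is the residue symmetry lemma of Section 3: the node $(i,j)$ has the same residue as the node $(j,i)$. Hence transposition sends the addable (respectively removable) $i$-nodes of $\lambda$ bijectively to the addable (respectively removable) $i$-nodes of $\lambda^t$. The immediate consequences are these.
\begin{itemize}
\item $N(i)$ is the same for $\lambda$ and $\lambda^t$.
\item $N_0$ is the same for $\lambda$ and $\lambda^t$.
\item $K_i$ and $q^d$ commute with $T$.
\end{itemize}
Transposition reverses the order ``below/above'' of nodes along the rim. So for an addable $i$-node $\mathfrak n$ with transpose $\mathfrak n^t$,
\[N(\mathfrak n^t,i)=\#\{\text{addable above }\mathfrak n\}-\#\{\text{removable above }\mathfrak n\},\]
and therefore $N(\mathfrak n,i)+N(\mathfrak n^t,i)$ equals $N(i)$ up to the correction from $\mathfrak n$ itself. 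This gives
\[
T f_i T=\overline{\,f_i\,}\,K_i^{\pm1}
\]
up to the standard twist, that is, $T$ intertwines $f_i$ with its $q\leftrightarrow q^{-1}$ conjugate multiplied by a power of $K_i$. The same holds for the divided powers $f_i^{(k)}$, via the quantum multinomial identities.

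Next we run the induction. Write the LLT first approximation as
\[
A(\mu)=f_{i_s}^{(k_s)}\cdots f_{i_1}^{(k_1)}\lvert~\rangle .
\]
Because $\lvert~\rangle$ is fixed by $T$, the intertwining relation shows that $T(A(\mu))$ equals $\overline{A(\mu)}$ multiplied by a global power of $q$ that depends only on the weight. This power is trivial because weight and defect are preserved. Since $A(\mu)$ is bar-invariant, $T(A(\mu))=A(\mu)$ up to this scalar. So the support of $A(\mu)$ is transpose-closed, and the coefficient of $\lvert\lambda\rangle$ is related to the coefficient of $\lvert\lambda^t\rangle$ by $q\mapsto q^{-1}$ together with a shift.

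The correction step subtracts bar-invariant multiples $\alpha(q)G(\nu)$ for $\nu$ of smaller dominance. By the induction hypothesis each such $G(\nu)$ has transpose-closed support. We must also check that the coefficients $\alpha(q)$ for $\nu$ and $\nu^t$ are equal whenever $\nu^t$ also labels a basis element. If so, the subtracted terms are $T$-symmetric, and the support of $G(\mu)$ remains transpose-closed.

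The main obstacle is the twist in the intertwining relation. Transposition does not literally commute with $f_i$; it exchanges $q$ and $q^{-1}$ in the coefficients, while the canonical basis is defined by the condition $d_{\lambda\mu}\in q\mathbb Z[q]$. So we cannot conclude that $T(G(\mu))$ is literally a canonical basis element. We only need support closure, though, which is insensitive to how the coefficients are rescaled. To handle this, we would first try to prove the cleaner statement that $T\circ(\text{bar})$ fixes $A(\mu)$ up to a monomial. We would then argue that the subtracted corrections cannot cancel a symmetric pair of terms, since cancellation at $\lambda$ would force cancellation at $\lambda^t$ by the same symmetry. If that fails, the fallback is to check the claim directly on the Fock space rules. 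Here the point is that the second symmetry lemma of Section 3 ($\lambda^t=\lambda$ for defect $0$) makes every leading term of the LLT algorithm self-conjugate, which would make the support of each $A(\mu)$ transpose-symmetric.
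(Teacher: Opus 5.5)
Your proposal never reaches the conclusion, and several of the steps it asserts are false. Write $\beta$ for the coefficientwise substitution $q\mapsto q^{-1}$ in the natural basis, and $\psi$ for the bar involution of $V(\Lambda_n)$.

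\textbf{Where the argument fails.}

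The opening dichotomy fails because $T$ does not even preserve $V(\Lambda_n)$, which is where $\psi$ lives. In the paper's example with $h=7$, the weight space of content $(0,0,1,1)$ is spanned by $G=f_2f_3\lvert\,\rangle=\lvert(1,1)\rangle+q^2\lvert(2)\rangle$. But $T(G)=\lvert(2)\rangle+q^2\lvert(1,1)\rangle$ is not a multiple of it. Even an identity $T(G(\mu))=G(\mu^t)$ would only say that the transpose of $\operatorname{supp}G(\mu)$ is $\operatorname{supp}G(\mu^t)$. It would not say that $\operatorname{supp}G(\mu)$ is closed under transposition, so the ``either way'' is a non sequitur.

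In the induction step, ``this power is trivial'' is false: in the same example $T(G)=q^{2}\beta(G)$. The claim ``since $A(\mu)$ is bar-invariant, $T(A(\mu))=A(\mu)$ up to this scalar'' confuses $\psi$-invariance with $\beta$-invariance. Only the support statement for $A(\mu)$ survives.

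For the correction step you give a heuristic rather than an argument. The condition you propose to check (equal coefficients for $\nu$ and $\nu^t$) is not the relevant one. The fallback rests on a false premise: Lemma 3.2 concerns defect-$0$ weights only, and labels such as $\mu=(1,1)$ are not self-conjugate.

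\textbf{The missing idea.} Upgrade your intertwining relation to an identification of $\psi$ itself; this makes the LLT bookkeeping unnecessary.

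For $i\neq 0$ your rim count gives $Tf_iv=q_i^{\langle h_i,\mathrm{wt}\rangle-1}\,\beta f_i\beta\,Tv$ for $v$ of weight $\mathrm{wt}$. So the antilinear map $\Phi(v)=q^{c(\mathrm{wt}\,v)}\,\beta(Tv)$ commutes with $f_i$ exactly when $c(\mathrm{wt}-\alpha_i)-c(\mathrm{wt})=2d_i(\langle h_i,\mathrm{wt}\rangle-1)$. Since $\operatorname{def}(\lambda-\alpha_i)-\operatorname{def}(\lambda)=d_i(\langle h_i,\lambda\rangle-1)$, this is solved by $c=2\operatorname{def}$.

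For $i=0$ the two $0$-diagonals are adjacent (double nodes, and the $(1+q^2)$ factors of [AS1]). You must redo the count there and verify that the twist still depends only on the weight. This is not a formality: $\lvert(1^4)\rangle+q^2\lvert(4)\rangle$ is fixed by $q^{2}\beta T$ although $2\operatorname{def}=3$.

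Once that is done, $\Phi$ fixes $\lvert\,\rangle$ and commutes with every $f_i^{(k)}$, because the $[k]_i!$ are bar-symmetric. Hence $\Phi=\psi$ on $V(\Lambda_n)$. Then $\Phi(G(\mu))=G(\mu)$ reads $d_{\lambda^t\mu}(q)=q^{c}\,d_{\lambda\mu}(q^{-1})$, which is stronger than the proposition.

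If you prefer to stay inside the LLT recursion, the same fact replaces your no-cancellation heuristic. The vectors $v$ of a fixed weight with $T(v)=q^{c(\mathrm{wt})}\beta(v)$ form a $\mathbb Z[q+q^{-1}]$-module. The LLT correction coefficients are bar-symmetric, so the subtraction step preserves the property. No pairing of $\nu$ with $\nu^t$ is needed.

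For comparison, the paper's own proof only propagates supports through the $f_i$ using Lemma 3.1. It says nothing about cancellation or the correction step, so the difficulty you isolated is real; your proposal just does not resolve it.
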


\begin{proof}
In degree $0$, $G(\phi)$ is $\lvert  \rangle$ which is symmetric. Let $G(\mu)=\sum_{\lambda}d_{\lambda\mu}\lambda$ be a canonical basis elements, and let $\supp G(\mu)=\{\lambda|d_{\lambda\mu}\neq0\}$ be the set of partitions in $G(\mu)$ with non-zero cofficient. By the induction 
hypothesis let the partitions $\lvert \lambda \rangle$ and $\lvert \lambda^t \rangle$  appears in $\supp G(\mu)$, such that for $i=1,2\dots,k$,  $\lvert \lambda \rangle \in f_i^{(k)}(G(\mu))$, and we want to prove that $f^{(k+1)}(G(\mu))$ contain $\lvert \lambda^x \rangle$ and $\lvert (\lambda^x)^t \rangle$, where $x$ is the $i$ addable node and $\lvert \lambda^x \rangle\in f^{(k+1)}(G(\mu))$.

Let the node $(s,k)$ be an $i$-addable node of $\lvert \lambda \rangle$, then the node $(k,s)$ is also an $i$-addable node of $\lvert \lambda^t \rangle $, $\forall k,s \in \mathbb{N}$, since they have the same residue according to Lemma 3.1, so in this way we keep $\lvert \lambda^x) \rangle\ $ symmetric to 
$\lvert (\lambda^x)^t \rangle\ $,  after dividing by $[k+1]_{i}$ the coefficient will still non zero.
\end{proof}

\begin{examp}
	Let $h=7$, $\Lm=\Lm_3$.  We calculate the crystal for $c_3 \leq 1$. Each slice with fixed value of $c_3=c$ turns out to be centrally symmetric around the point with hub $[0,0,0,1]$ and defect $2c$, so for $c_3=2$ it is enough to consider the upper half of the slice. The only point in the face with $c_3=0$ is the empty partition because for $c_3=0$, our partitions are all $3$-corner partitions, so we cannot begin to build partitions until $c_3 \geq 1$. Thus we will begin with $c_3=1$. We will list the canonical basis elements by degree and content, giving the hub, the content and the defect, and the canonical basis element. The central symmetry matches a hub $[\theta_0,\dots,\theta_{n-1},\theta_n]$ to $[-\theta_0,-\theta_1,\dots,-\theta_{n-1},2-\theta_n]$  and the defects are preserved.

	\noindent \textbf{Slice $c_3=1$}
\begin{enumerate}
\item 
\begin{itemize}
	\item (0,0,0,1) $[0,0,2,-1]^0$ \young(3)
\end{itemize}	
\item 	
\begin{itemize}
	\item (0,0,1,1)   $[0,1,0,0]^1$ \young(3,2)+$q^2$\young(32)
\end{itemize}	
\item 
\begin{itemize}
	\item (0,1,1,1) $[2,-1,1,0]^1$ \young(3,2,1)	+$q^2$\young(321)
	\item (0,0,2,1) $[0,2,-2,1]^0$ \young(32,2)
\end{itemize}	
\item 
\begin{itemize}
	\item (1,1,1,1) $[0,0,1,0]^{1.5}$ \young(3,2,1,0)+$q^2$\young(3210)
	\item (0,1,2,1) $[2,-1,-1,1]^1$ \young(32,2,1)+$q^2$\young(321,2)
\end{itemize}	
\item 
\begin{itemize}
	\item (2,1,1,1) $[-2,1,1,0]^{1}$ \young(3,2,1,0,0)	+$q^2$\young(32100)
	\item (1,1,2,1) $[0,1,-1,1]^{1.5}$ \young(32,2,1,0)	+$q^2$\young(3210,2)
	\item (0,2,2,1) $[4,-2,0,1]^0$ \young(321,2,1)
\end{itemize}	
\item 
\begin{itemize}
	\item (2,2,1,1) $[0,-1,2,0]^{1}$ \young(3,2,1,0,0,1)	+$q^2$\young(321001)
	\item (2,1,2,1) $[-2,2,-1,1]^{1}$ \young(32,2,1,0,0)	+$q^2$\young(32100,2)
	\item (1,2,2,1) $[2,-1,0,1]^{1.5}$ \young(321,2,1,0)+$q^2$\young(3210,2,1)
\end{itemize}	
\item 
 (2,2,2,1) $[0,0,0,1]^2$
\begin{itemize}
	\item  \young(321,2,1,0,0)+$q$ \young(3210,2,1,0)+$q^4$\young(32100,2,1)
	\item  \young(3,2,1,0,0,1,2)+$q^2$\young(32,2,1,0,0,1)+$q^2$\young(321001,2)	+$q^4$\young(3210012)
	\item \young(32,2,1,0,0,1)+ $q^2$ \young(321,2,1,0,0)	+$q^2$\young(32100,2,1)	+$q^4$\young(321001,2)

\end{itemize}	
\item 
\begin{itemize}
	\item  (2,3,2,1) $[2,-2,1,1]^{1}$: \young(321,2,1,0,0,1)+ 	+$q^2$\young(321001,2,1)
	\item  (2,2,3,1) $[0,1,-2,2]^1:$ \young(32,2,1,0,0,1,2)+$q^2$\young(3210012,2)	
	\item  (3,2,2,1) $[2,-1,0,1]^{1.5}:$ \young(3210,2,1,0,0)+$q^2$ \young(32100,2,1,0)
\end{itemize}	
\end{enumerate}	

\end{examp}

\begin{lem}\label{defect}For a dominant integral weight 
	$\Lambda=a_0\Lambda_0+a_1\Lambda_1+\dots+a_n \Lambda_{n}$, if $\eta=\Lambda-\alpha$ for $\alpha \in Q_+$ has a nonnegative $i$-component $w=\theta_i$ in $\hub(\eta)$, then $\eta, \eta-\alpha_i,\dots,\eta-w\alpha_i$ are vertices in an $i$-string with $s_i(\eta)=\eta-w\alpha_i$. We recall that the symmetrizing matrix $D=\diag(d_0,d_1,\dots,d_n)$ where $d_0=\frac{1}{2}, d_1=1,\dots, d_{n-1}=1,d_n=2$,
	\begin{enumerate}
		\item   The defect of $\lambda=\eta-k\alpha_i$ for $0 \leq k \leq w$ is
		$	\defect(\eta)+d_ik(w-k)$.
		\item  The $i$-components of the hub descend by $2$ as we move down the $i$-string,  so the absolute values of the hub components
		 $\theta_i=\langle h_i, \lambda  \rangle$ decrease as the defect increases and then increase as the defect decreases.
	\end{enumerate}	 
			  In summary,
		\begin{center}
			\begin{tabular}{ | l || c | c | c | c | c | c | }
				\hline
				$\lambda$ &$\eta$ & $\eta-\a_i$&$\dots$ & $\eta-k\a_i$ &$\dots$ & $\eta-w\a_i$ \\ \hline \hline
				$\defect(\lambda)$ &$\defect(\eta)$&$\defect(\eta)+d_i(w-1)$&$\dots$&$\defect(\eta)+d_ik(w-k)$&$\dots$ & $\defect(\eta)$\\ \hline
				$\langle h_i,\lambda \rangle$ &w&w-2&\dots&w-2k&\dots&-w \\ \hline					
				\hline
			\end{tabular}
		\end{center}
\end{lem}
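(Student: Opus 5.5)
We prove the two parts by direct computation with the symmetric form, using the formula $\defect(\lambda)=(\Lambda\mid\alpha)-\frac12(\alpha\mid\alpha)$ for $\lambda=\Lambda-\alpha$. The string statement itself (that $\eta,\eta-\alpha_i,\dots,\eta-w\alpha_i$ are weights, with $s_i(\eta)=\eta-w\alpha_i$) is standard $\mathfrak{sl}_2$-theory for the integrable module $V(\Lambda)$. Restricting to the $U_i\cong\mathfrak{sl}_2$ generated by $e_i,f_i,h_i$, the set of weights of $V(\Lambda)$ in $\eta+\mathbb Z\alpha_i$ is an unbroken string symmetric under $s_i$ [\cite{Ka}, Prop. 3.6]. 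Since $s_i(\eta)=\eta-\langle h_i,\eta\rangle\alpha_i=\eta-w\alpha_i$ with $w\ge 0$, every $\eta-k\alpha_i$ with $0\le k\le w$ lies between $\eta$ and its reflection, so it is a weight and a vertex of the $i$-string in $\widehat P(\Lambda)$.

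For part (2), we compute $\langle h_i,\eta-k\alpha_i\rangle=w-k\langle h_i,\alpha_i\rangle=w-k a_{ii}=w-2k$, since every diagonal Cartan entry is $2$. Thus the $i$-component of the hub runs through $w,w-2,\dots,-w$. Its absolute value decreases until the middle of the string and then increases, and this is exactly the shape of the defect in part (1).

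For part (1), write $\lambda=\eta-k\alpha_i=\Lambda-(\alpha+k\alpha_i)$ and expand:
\[
\defect(\lambda)=(\Lambda\mid\alpha+k\alpha_i)-\tfrac12(\alpha+k\alpha_i\mid\alpha+k\alpha_i)
=\defect(\eta)+k\bigl((\Lambda\mid\alpha_i)-(\alpha\mid\alpha_i)\bigr)-\tfrac{k^2}{2}(\alpha_i\mid\alpha_i).
\]
The key step is to identify $(\Lambda-\alpha\mid\alpha_i)=(\eta\mid\alpha_i)$ with $d_i\langle h_i,\eta\rangle=d_iw$ and $(\alpha_i\mid\alpha_i)$ with $2d_i$. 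This requires checking that the paper's normalization of the symmetric form (with $(\alpha_0\mid\alpha_0)=1$, $(\alpha_i\mid\alpha_i)=2$ for $1\le i\le n-1$, $(\alpha_n\mid\alpha_n)=4$) satisfies $(\lambda\mid\alpha_i)=d_i\langle h_i,\lambda\rangle=\frac{(\alpha_i\mid\alpha_i)}{2}\langle h_i,\lambda\rangle$. The cleanest route is to check this on the basis $\Lambda_0,\dots,\Lambda_n,\delta$, treating the $\alpha_j$ separately via $(\alpha_j\mid\alpha_i)=d_ia_{ij}$ from the symmetrization $DC$. On $\delta$ both sides vanish. On $\Lambda_j$ the paper's convention $(\Lambda_j\mid\alpha_i)=\delta_{ij}$ must be reconciled with $d_i\delta_{ij}$, so this convention has to be interpreted as $(\Lambda_j\mid\alpha_i)=d_i\delta_{ij}$, as in Kac. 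I expect this bookkeeping of normalizations to be the only real obstacle, and I would resolve it by adopting Kac's normalization throughout. With it the expression becomes $\defect(\eta)+kd_iw-k^2d_i=\defect(\eta)+d_ik(w-k)$, as claimed.

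Finally, the table follows by substituting $k=0,1,\dots,w$. In particular the defect returns to $\defect(\eta)$ at $k=w$, consistent with the fact that $s_i$ preserves $(\cdot\mid\cdot)$ and hence the defect.
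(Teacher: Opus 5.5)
Your proof is correct and complete. The paper gives no proof of this lemma: it is stated and then used in Proposition 5.2, having been imported from the earlier work on the block-reduced crystal. So there is no argument of the paper's to compare against. Your route is the standard one:
\begin{itemize}
\item for the string statement, $\mathfrak{sl}_2$-string theory for the integrable module $V(\Lambda)$ (Kac, Prop.~3.6);
\item for part (2), the fact that $a_{ii}=2$;
\item for part (1), expanding $(\Lambda\mid\alpha+k\alpha_i)-\frac12(\alpha+k\alpha_i\mid\alpha+k\alpha_i)$ using $(\eta\mid\alpha_i)=d_i\langle h_i,\eta\rangle$ and $(\alpha_i\mid\alpha_i)=2d_i$.
\end{itemize}

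Your normalization caveat should be stated as a correction to the paper, not as an interpretation. The paper's literal convention $(\Lambda_j\mid\alpha_i)=\delta_{ij}$ is inconsistent with the rest of the paper:
\begin{itemize}
\item It contradicts the paper's own $(\Lambda\mid\delta)=r=a_0+2a_1+\dots+2a_n$, since it would give $(\Lambda_0\mid\delta)=2$ and $(\Lambda_n\mid\delta)=1$.
\item It would give $\defect(\Lambda_n-\alpha_n)=1-2=-1<0$. Yet Figure 2 lists $\Lambda_3-\alpha_3$, with hub $[0,0,2,-1]$, at defect $0$.
\item Under that convention the formula in (1) would pick up an extra term $k(1-d_i)a_i$ for $i\in\{0,n\}$.
\end{itemize}
So $(\Lambda_j\mid\alpha_i)=d_i\delta_{ij}$ is forced, and the lemma is true only with it.

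Finally, make explicit that $\eta\in P(\Lambda)$ is part of the hypothesis. The hub is only defined on $P(\Lambda)$, and Kac's Prop.~3.6 needs $\eta$ to be a weight of $V(\Lambda)$.
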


\begin{prop}
For $\Lambda=\Lambda_n, h=2n+1$. Let $\lambda$ be a partition of defect 0 and let $i$ be a residue for which $\lambda$ has $w$ addable $i$-nodes and no removable nodes. Then for any $k$, $0 \leq k \leq w$, the highest power of $q$ in $f_i^{(k)}(\lambda)$ is $2\lfloor \defect(f_i^{(k)}(\lambda))\rfloor$.

\end{prop}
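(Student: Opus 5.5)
We work directly with the Fock space rules for $f_i$. Because $\lambda$ has defect $0$, it is a single partition (Lemma 3.2 shows it is self-conjugate) and, by Lemma \ref{defect}, its $i$-component in the hub is $w$. The hypothesis says $\lambda$ has exactly $w$ addable $i$-nodes and no removable $i$-nodes, so $\eta=\lambda$ is the top of its $i$-string. The vector $f_i^{(k)}\lvert\lambda\rangle$ is then a sum over $k$-subsets $S$ of the addable $i$-nodes. No removable $i$-nodes are present at any stage, apart from the nodes we add ourselves.

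Order the addable $i$-nodes $\mathfrak n_1,\dots,\mathfrak n_w$ from bottom to top. Iterating $f_i$ with the exponent $N(\mathfrak n,i)$ gives the standard formula: the coefficient of $\lvert\lambda^{S}\rangle$ in $f_i^k\lvert\lambda\rangle$ is $q_\ast^{\,\sum_{\mathfrak n\in S}(\#\{\text{addable below }\mathfrak n\text{ not in }S\}-\#\{\text{in }S\text{ below }\mathfrak n\})}$ times the symmetrizing sum over the orders in which $S$ is added. Dividing by $[k]_i!$, this yields the closed form
\[
f_i^{(k)}\lvert\lambda\rangle=\sum_{S}q_i^{\,\sum_{\mathfrak n\in S}\#\{\mathfrak m\notin S:\ \mathfrak m\text{ below }\mathfrak n\}}\lvert\lambda^S\rangle,
\]
which is the usual LLT-type computation. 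Here the base of the exponent should be the power of $q$ attached to residue $i$ as in the definition of $q_i$, and we check this against the example for $\Lambda_3$.

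The exponent $\sum_{\mathfrak n\in S}\#\{\mathfrak m\notin S,\ \mathfrak m<\mathfrak n\}$ counts inversions between $S$ and its complement. It is maximized when $S$ consists of the top $k$ nodes, giving $k(w-k)$. So the highest power of $q$ is $q_i^{k(w-k)}$, that is, $q^{e_i k(w-k)}$, where $e_i$ is the exponent with $q_i=q^{e_i}$. By Lemma \ref{defect}(1) with $\defect(\lambda)=0$, we have $\defect(f_i^{(k)}\lambda)=d_ik(w-k)$. It remains to check that $e_i\,k(w-k)=2\lfloor d_ik(w-k)\rfloor$ in each case.
\begin{itemize}
\item For $1\le i<n$: $d_i=1$ and $e_i=2$, so the identity holds exactly.
\item For $i=n$: $d_n=2$, and the identity needs the effective exponent to be $4$. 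This holds because $q_n=q^4$, or equivalently the $n$-nodes appear in symmetric pairs.
\item For $i=0$: $d_0=\tfrac12$, and the floor enters. Here the double addable $0$-nodes and the symmetry of Lemma 3.1 (nodes $(s,t)$ and $(t,s)$ have equal residue) must be used. The $0$-nodes of a self-conjugate $\lambda$ come in transposed pairs plus possibly a diagonal node, and the effective count of inversions should give $2\lfloor k(w-k)/2\rfloor$.
\end{itemize}

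The main obstacle is the residue-$0$ case, and more generally matching the normalization of $q$ in the Fock space action (which uses $q^{N}$) with $q_i$. I would first verify the formula on the examples of the $\Lambda_3$ table, for instance the entries $\young(3,2)+q^2\young(32)$, before doing the general counting. For $i=0$ I would pair each off-diagonal addable $0$-node with its transpose. I would then show that an inversion between the two members of a pair contributes $1$, so the total power is even and equals $2\lfloor k(w-k)/2\rfloor$, with the parity loss coming exactly from an odd $k(w-k)$.
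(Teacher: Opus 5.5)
For $i\neq 0$ your argument is correct, and it takes a different route from the paper. The paper inducts on $k$ and follows only the top term: it adds the topmost addable node, multiplies by $q_i^{w-k}$, and divides by $[k]_i$. Your closed form $f_i^{(k)}\lvert\lambda\rangle=\sum_S q_i^{\mathrm{inv}(S)}\lvert\lambda^S\rangle$ gives every coefficient at once. It is valid because for $i\neq 0$ no two adjacent nodes have residue $i$. Hence the $w$ addable $i$-nodes are independent, and no removable $i$-node other than the added ones ever appears. The maximum $k(w-k)$ then gives $2k(w-k)$ and $4k(w-k)$, as required.

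The gap is the case $i=0$, which is exactly where the floor matters, and there your proposal does not work.

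\begin{itemize}
\item Since $\widehat{m}=0$ for two consecutive values of $m$ (rows read $\dots,1,0,0,1,\dots$), the $w=\theta_0$ addable $0$-nodes come in adjacent couples. The second node of a couple becomes addable only after the first is added. The Fock space also treats a couple specially: the paper uses the factor $1+q^2$ from [AS1].
\item So your inversion formula does not apply here. Forced through, it gives top exponent $k(w-k)$. Since $\theta_0$ is always even, this is odd for every odd $k$, which contradicts the statement.
\item The $\Lambda_3$ example already shows this. The partition $\lambda=(3,1,1)$ has defect $0$, no removable $0$-nodes, and $w=4$. Yet $f_0\lvert\lambda\rangle=\lvert(3,1,1,1)\rangle+q^2\lvert(4,1,1)\rangle$ has top power $2=2\lfloor 3/2\rfloor$, not $3$.
\item Your suggested repair via transposed pairs has no counterpart in the action of $f_0$. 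The correction comes from the adjacent couples, not from transposition.
\end{itemize}

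What is missing is the paper's parity analysis.

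\begin{itemize}
\item For odd $k$, the topmost added node is the first node of a couple, and its own partner does not count as lying below it. Exactly one is lost from $k(w-k)$, so the exponent is $k(w-k)-1$.
\item For even $k$, the last node added is the second node of a couple. Its partner is removable beside it, contributing $-1$. The extra factor $1+q^2$ contributes $+2$. After dividing by $[k]_0$, this gives $k(w-k)$.
\end{itemize}

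Without an explicit rule for how $f_0$ acts on couples, the $i=0$ case remains unproved.
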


\begin{proof}
Let $\lambda$ be a partition in $P(\Lambda)$ such that, $\hub(\lambda)=[\theta_0,\dots,\theta_i,\dots,\theta_n]$, and $\defect(\lambda)=0$. For a residue $0\leq i \leq n$, let $\theta_i>0$ and suppose $\lambda$ has only $i$ addable $i$-nodes. In this case, $G(\lambda)$ contains one partition multiplied by  $q^0$. According to the previous proposition the length of the $i$-string is 
$w=\theta_i$. since there is one partition in $G(\lambda)$ that is $q^0\lambda$, which means the biggest power of $q$ is 0. That equals  $2\lfloor \defect(\lambda)\rfloor=0$. Let us prove the proposition by induction for $f^{(k)}_i(\lambda)$. For $k=1$ we have $ f_i(\lambda)$, and there is three cases: 
\begin{enumerate}
   
   \item For $ 1\leq i < n$, 
  for $f_i(\lambda)$ we operate by $f_i$ at one partition $q^0\lambda$, according to the definition of $f_i$, in order to get the biggest power of $q$ we should add the $i$-addable node in the top of the partition, where all the other addable nodes are below this node. In this case we have $(w-1)$ addable $i$-nodes, so we multiply  $q^0\lambda$ by $q_i^{w-1}$, where $q_i=q^2$, so we get $q^{2(w-1)}$, this power is equal to $2\lfloor \defect(f_i(\lambda))\rfloor$, where $\lfloor\defect(f_i(\lambda))\rfloor=\defect(f_i(\lambda))=d_i(w-1)=w-1$, according to Lemma 5.1, where $d_i=1$.
  \item For $ i=n$,
  for $f_i(\lambda)$ we operate by $f_i$ at one partition $q^0\lambda$, according to the definition of $f_i$, in order to get the biggest power of $q$ we should add the $i$-addable node in the top of the partition, where all the other addable nodes are below this node. In this case we have $(w-1)$ addable $n$-nodes, so we multiply  $q^0\lambda$ by $q_i^{w-1}$, where $q_i=q^4$, so we get $q^{4(w-1)}$, this power is equal to $2\lfloor \defect(\lambda)\rfloor$, where $\lfloor\defect(\lambda)\rfloor=\defect(\lambda)=d_i(w-1)=2(w-1)$, according to Lemma 5.1, where $d_i=2$.

    \item For $ i=0$,
    for $f_0(\lambda)$ we operate by $f_0$ at one partition $q^0\lambda$, according to the definition of $f_i$, in order to get the biggest power of $q$ we should add the $0$-addable node in the top of the partition, where all the other addable nodes are below this node. In this case we have $(w-2)$ addable $0$-nodes, since according the numbering of nodes in the Young diagram [AS1], there is always a couple of addable 0-nodes next to each other, so we should add the first addable node, in this way the number of addable 0-nodes below is reduced by 1, so we multiply $q^0\lambda$ by $q_i^{w-2}$, where $q_i=q$, so we get $q^{(w-2)}$, this power is equal to $2\lfloor \defect(f_i(\lambda))\rfloor$, where $2\lfloor\defect(f_i(\lambda))\rfloor=2\lfloor d_i(w-1)\rfloor$, according to Lemma 5.1, $d_0=\frac{1}{2}$, so we get $2\lfloor\frac{1}{2}(w-1)\rfloor$, for a hub $[\theta_0,\dots,\theta_n]$, the $\theta_0$ is influenced only by $\alpha_0$ and $\alpha_1$, and we begin the hub  with $\theta_0=0$, so every $\theta_0$ in the hub $(\lambda)\in \hat{P(\Lambda)}$ is even, this gives that $w$ is even, so $2\lfloor\frac{1}{2}(w-1)\rfloor=2(\frac{1}{2}(w-1)-\frac{1}{2})=w-2$.
     \end{enumerate}

    Now we use the induction, as our induction hypothesis, the biggest power of $q_i$ in $f^{(k-1)}_i(\lambda)$ is equal to $2\lfloor \defect(f^{(k-1)}_i\lambda)\rfloor$, where $1< k \leq w$ and suppose we get biggest power of $q$  when we add in the top of the partition.  
   \begin{enumerate}
  \item For $ 1\leq i < n$,
$f^{(k)}_i(\lambda)=f_i(f^{(k-1)}_i(\lambda)).[k]^{-1}_{q_i}$, so we want to operate by $f_i$ on $f^{(k-1)}_i(\lambda)$, as we suppose, we get the biggest power of $q$ when we add in the top of the partition, where the addable nodes are below the addable node, so we should multiply the biggest power of $f^{(k-1)}_i(\lambda)$ by $q_i^{w-k}$, according to the 
induction hypothesis the biggest power on $f^{(k-1)}_i(\lambda)$ is $2\lfloor \defect(f^{(k-1)}_i(\lambda)\rfloor$ that in our case equal to $2\lfloor d_i(k-1)(w-(k-1))\rfloor$ according to Lemma 5.1. Since $d_i=1$ we can write the $2\lfloor \defect(f^{(k-1)}_i\lambda)\rfloor$ as  $2[(k-1)(w-(k-1))]$, $q_i=q^2$, so all told $q^{2[(w-k)+(k-1)(w-(k-1))]}=q^{2k(w-k)+2(k-1)}$. Now we should divide by $[k]_{q_i}$ $[F]$, so we get $q^{2k(w-k)}$, this power is equal to $2\lfloor \defect(f^{(k)}_i(\lambda)\rfloor=2\lfloor d_ik(w-k) \rfloor=2k(w-k)$,  where $d_i=1$ according to Lemma 5.1.

\item For $ i=n$, 
$f^{(k)}_i(\lambda)=f_i(f^{(k-1)}_i(\lambda)).[k]^{-1}_{q_i}$, so we want to operate by $f_i$ on $f^{k-1}_i(\lambda)$, as we suppose, we get the biggest power of $q$ when we add in the top of the partition, where the addable nodes are below the addable node, so we should multiply the biggest power of $f^{(k-1)}_i(\lambda)$ by $q_i^{w-k}$, according to the 
induction hypothesis the biggest power on $f^{k-1}_i(\lambda)$ is $2\lfloor \defect(f^{k-1}_i(\lambda)\rfloor$ that in our case equal to $2\lfloor d_i(k-1)(w-(k-1))\rfloor$ according to Lemma 5.1. Since $d_i=2$ we can write $2\lfloor \defect(f^{k-1}_i(\lambda)\rfloor$ as $4(k-1)(w-(k-1))$, $q_i=q^4$, so all told $q^{4[(w-k)+(k-1)(w-(k-1))]}=q^{4k(w-k)+4(k-1)}$. Now we should divide by $[k]_{q_i}$ $[F]$, so we get $q^{4k(w-k)}$, this power is equal to $2\lfloor \defect(f^{(k)}_i(\lambda)\rfloor=2\lfloor d_ik(w-k) \rfloor=4k(w-k)$,  where $d_i=2$ according to Lemma 5.1.

   \item For $ i=0$,
   $f^{(k)}_i(\lambda)=f_i(f^{(k-1)}_i(\lambda))$, so we want to operate by $f_i$ on $f^{(k-1)}_i(\lambda)$, as we suppose, we get the biggest power of $q$ when we add in the top of the partition, where the other addable nodes are below the node we add. For $i=0$ we need to deal with two cases:
   \begin{enumerate}
   
   \item For odd $k$, 
since according the numbering of nodes in the Young diagram [AS1], there is always a couple of addable 0-nodes next to each other, so if we operate by $f_0^{k-1}$, that says we operate by $f_0$ an even number of times, and the 0-addable node we should add is the first 0-node, in this state the number of the addable 0-nodes below this node is $w-k-1$, which says we multiply the biggest power of $f^{(k-1)}_i(\lambda)$ by $q_i^{w-k-1}$. According to the induction hypothesis, the biggest power on $f^{(k-1)}_i(\lambda)$ is $2\lfloor \defect(f^{(k-1)}_i(\lambda)\rfloor$ which according to Lemma 5.1 is equal to $2\lfloor d_i(k-1)(w-(k-1))\rfloor$, where $d_i=\frac{1}{2}$ .

We can write $\lfloor d_i(k-1)(w-(k-1))\rfloor=\frac{(k-1)(w-(k-1))}{2}$ since we showed that $w$ is even and also $k-1$ is even, so all told $q_i^{w-k-1+2(\frac{(k-1)(w-(k-1))}{2}}=q_i^{k(w-k)+(k-2)}$, $q_i=q$ and we should divide by $[k]_{q_i}$, so we get $q_i^{k(w-k)-1}$, this power is equal to $2\lfloor \defect(f_i^{(k)}(\lambda))\rfloor$, since,

\ $2\lfloor \defect(f^{(k)}_i(\lambda))\rfloor$$ =2\lfloor d_ik(w-k) \rfloor=2\lfloor \frac{k(w-k)}{2} \rfloor=2(\frac{k(w-k)}{2}-\frac{1}{2})=k(w-k)-1$.

   \item For even $k$,
   since according the numbering of nodes in the Young diagram [AS1], there is always a couple of addable 0-nodes next to each other, so if we operate by $f_0^{(k-1)}$, that says we operate by $f_0$ an odd number times, and the 0-addable node we should add is the second 0-node, in this state the number of the addale 0-nodes below this node is $w-k$ and there is 1 removable 0-node next to this 0-addable node,  which says we multiply the biggest power of $f^{(k-1)}_i(\lambda)$ by $q_i^{w-k-1}$. 
   
   According to the induction hypothesis, the biggest power on $f^{(k-1)}_i(\lambda)$ is $2\lfloor \defect(f^{(k-1)}_i(\lambda)\rfloor$ which according to Lemma 5.1 is equal to $2\lfloor d_i(k-1)(w-(k-1))\rfloor$, where $d_i=\frac{1}{2}$ . We can write $\lfloor d_i(k-1)(w-(k-1))\rfloor=\frac{(k-1)(w-(k-1))}{2}-\frac{1}{2}$ since we showed that $w$ is even and  $k-1$ is odd, so all told $q_i^{w-k-1+2(\frac{(k-1)(w-(k-1))}{2}-\frac{1}{2})}=q_i^{k(w-k)+(k-3)}$, $q_i=q$, and according to [AS1] since we have a couple of 0-addable node we should multiply by $(1+q^2)$ so the highest power of $q_i$ rise by 2, and we get $q_i^{k(w-k)+(k-1)}$  and we should divide by $[k]_{q_i}$, so we get $q^{k(w-k)}$, this power is equal to $2\lfloor \defect(f^{(k)}_i(\lambda))\rfloor$, since,
   $2\lfloor \defect(f^{(k)}_i(\lambda))\rfloor=2\lfloor d_ik(w-k) \rfloor=2\lfloor \frac{k(w-k)}{2} \rfloor=2(\frac{k(w-k)}{2})=k(w-k)$.
   \end{enumerate}

\end{enumerate}

\end{proof}

\noindent Keywords:  affine Lie algebra, highest weight representation, spin blocks, cyclotomic Hecke algebras

\noindent Conflict of Interest: None\\
\noindent Data availability: No data\\
\noindent Ola Amara-Omari
ORCID: 0000-0002-0651-5123

\end{document}